\definecolor{airforceblue}{rgb}{0.2, 0.2, 0.6}
\definecolor{bleudefrance}{rgb}{0.0, 0.0, 1.0}
\definecolor{darkorchid}{rgb}{0.6, 0.2, 0.8}
\definecolor{darkorange}{rgb}{1.0, 0.55, 0.0}
\definecolor{darkspringgreen}{rgb}{0.09, 0.45, 0.27}
\definecolor{commentoutput}{rgb}{0.33,0.38,0.44}
\definecolor{output}{rgb}{0.24, 0.08, 0.08}
\definecolor{circOut}{rgb}{0.0, 0.81, 0.82}
\definecolor{Gray}{gray}{0.9}  
\theoremstyle{plain} 
\newtheorem{proposition}{Proposition}[section] 
\newtheorem{theorem}[proposition]{Theorem}
\newtheorem{conjecture}[proposition]{Conjecture} 
\theoremstyle{definition}
\newtheorem{codeexample}[proposition]{Code Example} 
\theoremstyle{remark} 
\newtheorem{remark}[proposition]{Remark} 
\newcommand{\PP}{{\mathbb{P}}}  
\newcommand{\GG}{{\mathbb{G}}}
\newcommand{\YY}{{\mathbb{Y}^5}}  
\newcommand{\F}{{\mathcal{F}}} 
\numberwithin{equation}{section}
\def\colorablu{\textcolor{blue}}
\title[Explicit computations with cubic fourfolds and GM fourfolds]{Explicit computations with cubic fourfolds, Gushel--Mukai fourfolds, and their associated K3 surfaces}
\address{Dipartimento di Matematica e Informatica, Universit\`a degli Studi di Catania} 
\author[G. Staglian\`o]{Giovanni Staglian\`o}
\email{giovanni.stagliano@unict.it}
\date{\today} 
\subjclass[2020]{14J35, % $4$-folds
                 14J45, % Fano varieties
                 68W30, % Symbolic computation and algebraic computation                
                 14Q10, % Computational aspects in algebraic geometry: Surfaces, hypersurfaces    
                 14-04% Software, source code, etc. for problems pertaining to algebraic geometry 
                } 
\begin{document}

\begin{abstract} 
We present some applications of the \emph{Macaulay2} software package 
\href{https://faculty.math.illinois.edu/Macaulay2/doc/Macaulay2/share/doc/Macaulay2/SpecialFanoFourfolds/html/index.html}{\emph{SpecialFanoFourfolds}}, 
a package for working with Hodge-special cubic fourfolds and Hodge-special Gushel--Mukai fourfolds. In particular, we show how to construct new examples of such fourfolds, some of which turn out to be rational. We also describe how to calculate K3 surfaces associated with cubic or Gushel-Mukai fourfolds, which relies on an explicit unirationality of some moduli spaces of K3 surfaces.  
\end{abstract}

\maketitle

\section*{Introduction}
One of the aims of this note is to show how some computations 
with  
cubic fourfolds and Gushel--Mukai fourfolds,
which may appear very abstract,
can be done 
in an explicit and completely automatic way.
For this purpose, we will use the package 
\href{https://faculty.math.illinois.edu/Macaulay2/doc/Macaulay2/share/doc/Macaulay2/SpecialFanoFourfolds/html/index.html}{\emph{SpecialFanoFourfolds}} 
\cite{SpecialFanoFourfoldsSource}, which is included with 
\href{https://faculty.math.illinois.edu/Macaulay2/}{\emph{Macaulay2}} \cite{macaulay2} (\footnote{A more updated version of the package is available at \url{https://github.com/Macaulay2/M2/blob/development/M2/Macaulay2/packages/SpecialFanoFourfolds.m2}.}).
This package implements 
several functions to construct and analyze such fourfolds,
revolving around questions of rationality.
Most of the ideas on which these functions are based came from the papers 
\cite{RS1,RS3}; other references are \cite{BRS,HoffSta,HoffSta2,Hanine-Michele-2,famGushelMukai}.
For the general theory on cubic and Gushel--Mukai fourfolds, we mainly refer to \cite{Hassett,Has00,DIM}.

Recall that cubic fourfolds and Gushel--Mukai fourfolds 
are prime Fano fourfolds of index $3$ and $2$, respectively. 
By the work of 
Kobayashi and Ochiai \cite{kobayashi-ochiai}, 
Iskovskikh and Fujita \cite{fujita-polarizedvarieties}, and Mukai \cite{mukai-biregularclassification}, 
prime Fano fourfolds of index $\geq 2$ are completely classified. We summarize in Table~\ref{Table prime Fano fourfolds} this classification.
\begin{table}[htbp]
    \renewcommand{\arraystretch}{0.75}
     \centering
     \tabcolsep=1.0pt 
     \footnotesize   
\begin{adjustbox}{width=\textwidth}
\begin{tabular}{ccccc}
\hline
\rowcolor{gray!5.0}
Fourfold & Index & \begin{tabular}{c} Irrationality of \\ the very general \end{tabular} & \begin{tabular}{c} Description of \\ the rational ones \end{tabular} & Ref. \\
\hline 
\hline 
Projective space $\PP^4$ & $5$ & no & all & trivial \\
\hline
Quadric hypersurface in $\PP^5$ & $4$ & no & all & trivial \\
\hline \rowcolor{green!10}
\begin{tabular}{c} Cubic hypersurface in $\PP^5$ \\ (cubic fourfold) \end{tabular} & $3$ & not known & \begin{tabular}{c} some examples are \\ known but the general \\ picture is not clear \end{tabular} & \cite{Fano} \\
\hline 
\begin{tabular}{c} Complete intersection of \\two  quadrics in $\PP^6$ \end{tabular} & $3$ & no & all & trivial \\
\hline 
\begin{tabular}{c} Linear section in $\PP^{7}$ \\ of $\mathbb{G}(1,4)\subset\PP^{9}$ \end{tabular} & $3$ & no & all & \cite{Roth1949} \\
\hline 
Quartic hypersurface in $\PP^5$ & $2$ & yes & no known examples & \cite{Totaro2015} \\
\hline 
\begin{tabular}{c} Complete intersection of \\ a quadric and a cubic in $\PP^6$ \end{tabular} & $2$ & yes & no known examples & \cite{NOtte} \\
\hline 
\begin{tabular}{c} Complete intersection of \\ three quadrics in $\PP^7$ \end{tabular} & $2$ & yes & \begin{tabular}{c} a countable union of \\  closed subsets in the \\ corresp. moduli space \end{tabular} & \cite{HPT2} \\
\hline \rowcolor{green!10} 
\begin{tabular}{c} Gushel--Mukai fourfold \end{tabular} & $2$ & not known & \begin{tabular}{c} some examples are \\ known but the general \\ picture is not clear \end{tabular} & \cite{Roth1949} \\
\hline 
\begin{tabular}{c} Linear section in $\PP^9$ \\ of the spinorial $\mathbb{S}^{10}\subset\PP^{15}$ \end{tabular}  & $2$ & no & all & \cite{Roth1949} \\
\hline 
\begin{tabular}{c} Linear section in $\PP^{10}$ \\ of $\mathbb{G}(1,5)\subset\PP^{14}$ \end{tabular}  & $2$ & no & all & \cite{Roth1949} \\
\hline 
\begin{tabular}{c} Linear section in $\PP^{11}$  \\ 
of $LG(3,6)\subset\PP^{13}$ \end{tabular} & $2$ & no & all & \cite{Roth1949} \\
\hline 
\begin{tabular}{c} Fourfold of degree $18$ \\ and genus $10$ in $\PP^{12}$ \end{tabular} & $2$ & no & all & \cite{Roth1949} \\
\hline 
\end{tabular}
\end{adjustbox}
\caption{Prime \href{https://faculty.math.illinois.edu/Macaulay2/doc/Macaulay2/share/doc/Macaulay2/SpecialFanoFourfolds/html/_fano__Fourfold.html}{Fano fourfolds} 
of index $\geq 2$. 
The classical constructions of rationality in \cite{Roth1949}
are implemented in the function 
\href{https://faculty.math.illinois.edu/Macaulay2/doc/Macaulay2/share/doc/Macaulay2/MultiprojectiveVarieties/html/_parametrize_lp__Multiprojective__Variety_rp.html}{\texttt{parametrize}}.}
\label{Table prime Fano fourfolds}
\end{table}
For most of the known types of prime Fano fourfolds, 
there are classical constructions 
to produce rational parameterizations 
of the general fourfold.
It is also classically known that some special cubic 
fourfolds and some special Gushel--Mukai fourfolds are rational 
(see \cite{Morin,Fano,Roth1949}). However, the 
question whether 
the general cubic fourfold (resp., the general Gushel--Mukai fourfold) is rational
 or not is still unsolved. 
 It is expected that inside the moduli space of cubic fourfolds (resp., Gushel--Mukai fourfolds),
 the rational fourfolds should belong to the so-called Noether--Lefschetz locus,
 which is a countable union of hypersurfaces; 
 in particular, 
 the very general fourfold should be irrational. 
 Roughly speaking, the Noether--Lefschetz locus parametrizes 
 fourfolds containing a surface whose cohomology 
 class does not come from the ambient space ($\PP^5$ 
 for cubic fourfolds,
 and $\GG(1,4)\subset\PP^9$ for Gushel--Mukai fourfolds).

 One of the useful features of the package
 is the ability to find 
 somewhat random examples of fourfolds
 in the Noether--Lefschetz locus 
 and determine to which component they belong.
 In some likely cases, from 
 an automatic count of parameters
 performed on the example,
 it is possible to get information about the 
 Kodaira dimension of the component. 
 This has already been used in \cite{famGushelMukai} 
 to deduce that the first ten components of 
 the Noether--Lefschetz locus 
 in the moduli space of Gushel--Mukai fourfolds 
 have negative Kodaira dimension. In Theorem~\ref{estensioneTHMruled}
 we will extend this result by including the eleventh component.
 See also Theorem~\ref{Thm unirazionality of Cd}
for known analogous results in the case of cubic fourfolds.

 A second  feature of the package is the
 ability to establish the rationality of many fourfolds
 and  calculate rational parameterizations.
 This is based on ideas introduced in \cite{RS1}.
We will use this feature to show 
new examples of rational Gushel--Mukai fourfolds,
thus continuing the research initiated in
\cite{Roth1949} 
and continued in \cite{DIM,Explicit,HoffSta,rationalSta}.
See Tables~\ref{Table K3 associated with GMs} and \ref{Table K3 associated with GMs (further examples)} for 
a summary of new and old examples of rational Gushel--Mukai fourfolds;
see also Table~\ref{Table K3 associated with cubics} for the case of cubic fourfolds.

The last and most complex feature of the package 
is the possibility of calculating K3 surfaces 
``associated'' with cubic or Gushel--Mukai fourfolds 
(see Subsections~\ref{Subsection associated K3}).
This is based on ideas developed in \cite{RS3} and then applied also in \cite{HoffSta2}.
As a consequence of the construction, one deduces the (explicit) unirationality 
of some moduli space $\mathcal F_g$ of K3 surfaces of genus $g$ 
(see Theorem~\ref{teo esplicity unir}).
Just to give an example of application, the package 
can give us the explicit equations 
of general K3 surfaces of genus $22$ (see Code~Example~\ref{code for F22})
and, 
if we had enough computing power, we could produce 
an explicit dominant rational map from a projective space to 
the moduli space $\mathcal F_{22}$.
The unirationality of 
$\mathcal F_{22}$ 
has recently also been proved by Farkas and Verra \cite{FarkasVerraC42},
but their construction 
seems too abstract to be translated into codes.

The paper is organized as follows.
In section~\ref{Section1}, we recall general facts 
on the Noether--Lefschetz loci in the moduli spaces
of cubic and Gushel--Mukai fourfolds. We also recall the known results 
about the Kodaira dimension of the irreducible components of these loci.
Next we show how to construct new fourfolds. As an application,
we deduce Theorem~\ref{estensioneTHMruled}, 
extending the main result obtained in \cite{famGushelMukai}.
Moreover, we construct new examples of rational Gushel–Mukai fourfolds.
In Section~\ref{Section2}, we first recall the known results 
about the Kodaira dimension of the moduli spaces $\mathcal F_g$ 
of K3 surfaces of genus $g$. Then we recall the constructions 
from \cite{RS3,HoffSta2} 
of the explicit unirationality of $\mathcal F_g$, for $g=11,14,20,22$,
and show how to get equations for general K3 surfaces of these genera.

%\subsection*{Acknowledgements}

\section{Noether--Lefschetz loci in the moduli spaces of cubic fourfolds and Gushel--Mukai fourfolds}\label{Section1}

\subsection{Notation and definitions}
A \emph{cubic fourfold} is  a smooth cubic hypersurface in $ \PP^5 $.
Cubic fourfolds are parametrized by a moduli space $\mathcal{C}$ of dimension $20$.

A \emph{Gushel--Mukai fourfold} (GM fourfold for short) is a smooth quadric hypersurface in 
a $5$-dimensional linear section $\YY\subset \PP^8$ of the 
cone $\widehat{\mathbb{G}(1,4)}\subset\PP^{10}$ over the Grassmannian $\mathbb{G}(1,4)\subset\PP^9$. 
GM fourfolds are parametrized by a moduli space $\mathcal{GM}$ of dimension $24$.
The GM fourfolds $X\subset \YY$ for which $\YY$ contains the vertex of the cone $\widehat{\mathbb{G}(1,4)}$
are called of \emph{Gushel type} and describe 
an irreducible codimension 2 closed subset of $\mathcal{GM}$.
The GM fourfolds which are not of Gushel type are called \emph{ordinary}. These 
 can be realized as
smooth quadric hypersurfaces in a smooth Del Pezzo fivefold $\PP^8\cap \mathbb{G}(1,4)\subset\PP^9$.

The Noether--Lefschetz locus in the moduli space of cubic fourfolds 
has been studied by Hassett (see \cite{Hassett}, see also \cite{Has00,Levico}). It 
is a countable union of irreducible hypersurfaces 
\[\bigcup_{d} \mathcal C_d \subset \mathcal{C}, \quad d\in\{8, 12, 14, 18, 20, 24, 26, 30, 32, 36, 38, 42, 44,\ldots\},\]
where the \emph{discriminant} $d$ runs over all 
integers
$d>6$ with $d\equiv 0,2\,\mathrm{mod}\, 6$.

 The Noether--Lefschetz locus in the moduli space of GM fourfolds 
 has been studied by Debarre, Iliev, and Manivel (see \cite{DIM}, see also \cite{DK1,DK2,DK3}). It
is a countable union of hypersurfaces 
\[\bigcup_{d} \mathcal{GM}_d \subset \mathcal{GM},\quad d\in\{10, 12, 16, 18, 20, 24, 26, 28, 32, 34, 36, 40,\ldots\},\]
where the \emph{discriminant} $d$ runs over all integers 
$d>8$ with $d\equiv 0,2,4\,\mathrm{mod}\,8$. 
If $d\equiv 2\,\mathrm{mod}\,8$ then $\mathcal{GM}_d$
is the union of two irreducible 
components $\mathcal{GM}_d'\cup \mathcal{GM}_d^{''}$,
otherwise it is irreducible. 
Cubic and GM fourfolds belonging to the Noether--Lefschetz locus 
are called Hodge-special. 

\subsection{Formulas for the discriminant}
\label{Section formulas discriminant}
We provide some details on the calculation of the discriminant of 
a Hodge-special fourfold; see also
\cite[Section~4]{Has00} and \cite[Section~7]{DIM}.

Let $[X]\in \mathcal{C}$
be a cubic fourfold containing  an irreducible 
surface $S\subset\PP^5$ 
of degree $\deg(S)$ 
and sectional genus $g(S)$, which has smooth normalization and
 only  a finite number $\delta$ 
of nodes as singularities. 
Then one computes (see \cite[Theorem~9.3]{fulton-intersection}) that the self-intersection of $S$ in $X$ is given by
\begin{equation}\label{autointersezione caso cubic fourfolds}
(S)_X^2 = 3\,\deg(S) + 6\,g(S) - 12\,\chi(\mathcal O_S) + 2\,K_S^2 + 2\,\delta - 6 .
\end{equation}
Denoting by $H_X$ the class of the hyperplane section, we have 
 that $[X]\in \mathcal{C}_d$, 
where $d$ is 
the discriminant 
of the lattice 
spanned by $([H_X^2],[S])$, that is
\begin{equation}\label{discriminante caso cubic fourfolds}
 d = \mathrm{disc}\left[
\begin{array}{c|cc}
     & H_X^2 & [S] \\ \hline 
     H_X^2 & 3 & \deg(S) \\ \newline
     [S] & \deg(S) & (S)_X^2
     \end{array}
     \right]= 3(S)_X^2 - \deg(S)^2 .
\end{equation}

Similarly, let  
  $[X\subset\YY]\in \mathcal{GM}$ 
  be a GM fourfold
  containing an irreducible 
surface $S\subset\YY$,
which has smooth normalization and  
only  
a finite number $\delta$ 
of nodes as singularities. Let 
 $a\sigma_{3,1} + b\sigma_{2,2}$ denote the class of $\gamma_{X\ast}(S)$ 
 in the Chow ring of $\mathbb{G}(1,4)$,
 where $\gamma_X:X  \rightarrow \mathbb{G}(1,4)$ 
 is the so-called \emph{Gushel map}, defined 
 as the composition of the 
 embedding of $X$ into the cone $\widehat{\mathbb{G}(1,4)}$
followed by the projection from 
 the vertex.
 Again one computes that the self-intersection of $S$ in $X$ is given by
\begin{equation}\label{autointersezioneGM}
(S)_X^2 =
 a + 2\,b + 4\,g(S) - 12\,\chi(\mathcal O_S) + 2\,K_S^2 + 2\,\delta - 4 .
\end{equation}
We have that $[X]\in \mathcal{GM}_d$, 
where $d$ is 
the discriminant of the lattice 
spanned by 
$(\gamma_X^\ast(\sigma_{1,1}), \gamma_X^{\ast}(\sigma_{2}), [S])$, that is
\begin{equation}\label{discriminanteGM}
 d=\mathrm{disc}\left[
\begin{array}{c|ccc}
    & \gamma_X^\ast(\sigma_{1,1}) & \gamma_X^{\ast}(\sigma_{2}) & [S] \\ \hline 
     \gamma_X^\ast(\sigma_{1,1}) & 2 & 2 & b \\ \newline
      \gamma_X^{\ast}(\sigma_{2}) & 2 & 4 & a \\ \newline 
     \left[S\right] & b & a & (S)_X^2 \\
     \end{array}
     \right]= 4(S)_X^2 -2 a^{2}+4 a b-4b^{2} .
\end{equation}
If $d\equiv 2 \,\mathrm{mod}\,8$, then
$[X]\in\mathcal {GM}_d'$ 
if $a+b$ is even, and $[X]\in\mathcal {GM}_d^{''}$ 
if $b$ is even 
(see \cite[Corollary~6.3]{DIM}. 

\subsection{Implementation}
In the \emph{Macaulay2} package 
\href{https://faculty.math.illinois.edu/Macaulay2/doc/Macaulay2/share/doc/Macaulay2/SpecialFanoFourfolds/html/index.html}{\emph{SpecialFanoFourfolds}},
Hodge-special fourfolds are implemented 
 as subtypes of the class 
\href{https://faculty.math.illinois.edu/Macaulay2/doc/Macaulay2/share/doc/Macaulay2/MultiprojectiveVarieties/html/___Embedded__Projective__Variety.html}{``\texttt{embedded projective variety}''},
provided by the package \href{https://faculty.math.illinois.edu/Macaulay2/doc/Macaulay2/share/doc/Macaulay2/MultiprojectiveVarieties/html/index.html}{\emph{MultiprojectiveVarieties}} \cite{2021package}. However, a Hodge-special fourfold is represented internally by a pair 
 $(S,X)$, where 
$X$ is the fourfold and $S$ is a  
special hidden surface contained in $X$.
The command \href{https://faculty.math.illinois.edu/Macaulay2/doc/Macaulay2/share/doc/Macaulay2/SpecialFanoFourfolds/html/_surface.html}{\texttt{surface(X)}} returns this surface $S$.
Hodge-special fourfolds can be created by the two functions: 
\href{https://faculty.math.illinois.edu/Macaulay2/doc/Macaulay2/share/doc/Macaulay2/SpecialFanoFourfolds/html/_special__Cubic__Fourfold.html}{\texttt{specialCubicFourfold}} and
\href{https://faculty.math.illinois.edu/Macaulay2/doc/Macaulay2/share/doc/Macaulay2/SpecialFanoFourfolds/html/_special__Gushel__Mukai__Fourfold.html}{\texttt{specialGushelMukaiFourfold}},
which typically expect the pair $(S,X)$ as input and always return $X$.
It is also possible to give only the 
surface $S$ embedded in $\PP^5$ or in a fivefold $\YY$, so that
\emph{Macaulay2}
will randomly choose a fourfold $X$ containing $S$.
In the case when $X$ is a GM fourfold given as a subvariety of $\mathbb{P}^8$,
the {Gushel map} of $X$ is 
 calculated behind the scenes
 and can be retrieved with the command
 \href{https://faculty.math.illinois.edu/Macaulay2/doc/Macaulay2/share/doc/Macaulay2/SpecialFanoFourfolds/html/_to__Grass.html}{\texttt{toGrass(X)}}.
The function \href{https://faculty.math.illinois.edu/Macaulay2/doc/Macaulay2/share/doc/Macaulay2/SpecialFanoFourfolds/html/_discriminant_lp__Special__Gushel__Mukai__Fourfold_rp.html}{\texttt{discriminant}}, as its name suggests, calculates
the discriminant of the fourfold. This is done by applying the formulas 
\eqref{autointersezione caso cubic fourfolds}, \eqref{discriminante caso cubic fourfolds},
\eqref{autointersezioneGM}, and \eqref{discriminanteGM}, where the value of $K_S^2$ 
is determined by functions from the packages 
\href{https://faculty.math.illinois.edu/Macaulay2/doc/Macaulay2/share/doc/Macaulay2/Cremona/html/index.html}{\emph{Cremona}} \cite{packageCremona} 
and 
\href{https://faculty.math.illinois.edu/Macaulay2/doc/Macaulay2/share/doc/Macaulay2/CharacteristicClasses/html/index.html}{\emph{CharacteristicClasses}} \cite{Jost2015}, and 
the normalization of $S$ via the package \href{https://faculty.math.illinois.edu/Macaulay2/doc/Macaulay2/share/doc/Macaulay2/IntegralClosure/html/index.html}{\emph{IntegralClosure}} \cite{IntegralClosureSource}.
In some internal calculations it is required to invert complicated birational maps.
For this, functions from the package \href{https://faculty.math.illinois.edu/Macaulay2/doc/Macaulay2/share/doc/Macaulay2/RationalMaps/html/index.html}{\emph{RationalMaps}} \cite{RationalMaps} are also used.
\begin{codeexample}
In the following code, 
 we create 
a GM fourfold 
containing a \emph{$\tau$-quadric surface},
that is, a two-dimensional linear
section of a Schubert variety $\Sigma_{1,1}\simeq\GG(1,3)\subset\GG(1,4)$ (first row of Table~\ref{Table K3 associated with GMs}).
We input the equations in the ring of polynomials with $9$ variables $a,b,\ldots,i$.
Note that by default, several checks are done on the input data; 
these can be relaxed or strengthened using the option 
\href{https://faculty.math.illinois.edu/Macaulay2/doc/Macaulay2/share/doc/Macaulay2/SpecialFanoFourfolds/html/index.html}{\texttt{InputCheck}}.   
\begin{tcolorbox}[breakable=true,boxrule=0.5pt,opacityback=0.1,enhanced jigsaw]  
{\footnotesize
\begin{Verbatim}[commandchars=&!$]
&colore!darkorange$!M2 -q --no-preload$
&colore!output$!Macaulay2, version 1.19.1$
&colore!darkorange$!i1 :$ &colore!airforceblue$!needsPackage$ "&colore!bleudefrance$!SpecialFanoFourfolds$"; &colore!commentoutput$!-- version 2.5.1$ 
&colore!darkorange$!i2 :$ &colore!airforceblue$!QQ$[&colore!airforceblue$!vars$(0..8)]; &colore!commentoutput$!-- coordinate ring of PP^8$  
&colore!darkorange$!i3 :$ S = &colore!airforceblue$!projectiveVariety ideal$(i, f, c, b, a, e*g-d*h);
&colore!circOut$!o3 :$ &colore!output$!ProjectiveVariety, surface in PP^8$
&colore!darkorange$!i4 :$ X = &colore!airforceblue$!projectiveVariety ideal$(e*g-d*h+b*i, e*f-c*h+a*i, 
         d*f-c*g-a*i-b*i-c*i-f*i-i^2, b*f-a*g-a*h-b*h-c*h-f*h-h*i, 
         b*c-a*d-a*e-b*e-c*e-c*h+a*i-e*i, 2*a*b+b^2+2*a*c+c^2+2*a*d-c*d+
             a*e+2*b*e+2*c*e+2*a*f+2*c*f+f^2+3*a*g+b*g+2*c*g+f*g+2*a*h+b*h+
                 3*c*h+2*f*h+a*i+3*b*i+3*c*i-d*i+3*e*i+3*f*i+g*i+2*h*i+2*i^2);
&colore!circOut$!o4 :$ &colore!output$!ProjectiveVariety, 4-dimensional subvariety of PP^8$
&colore!darkorange$!i5 :$ X = &colore!bleudefrance$!specialGushelMukaiFourfold$(S,X);
&colore!circOut$!o5 :$ &colore!output$!ProjectiveVariety, GM fourfold containing a surface$
&colore!output$!     of degree 2 and sectional genus 0$
&colore!darkorange$!i6 :$ &colore!bleudefrance$!surface$ X
&colore!circOut$!o6 =$ &colore!output$!S$
&colore!circOut$!o6 :$ &colore!output$!ProjectiveVariety, surface in PP^8$
&colore!darkorange$!i7 :$ &colore!bleudefrance$!describe$ X
&colore!circOut$!o7 =$ &colore!output$!Special Gushel-Mukai fourfold of discriminant 10(')$
&colore!output$!     containing a surface in PP^8 of degree 2 and sectional genus 0$
&colore!output$!     cut out by 6 hypersurfaces of degrees (1,1,1,1,1,2)$
&colore!output$!     and with class in G(1,4) given by s_(3,1)+s_(2,2)$
&colore!output$!     Type: ordinary$
\end{Verbatim}
} \noindent 
\end{tcolorbox}
\end{codeexample}     

\subsection{Kodaira dimension of the components}\label{Subsection Kodaira Cd}
We recall known results about the Kodaira dimension of the components
of the 
Noether--Lefschetz loci in the moduli spaces $\mathcal C$
and $\mathcal{GM}$.
The first result in this direction has been obtained in \cite{Nuer},
by showing the following theorem with the exception of $d=42$.
 \begin{theorem}[\cite{Nuer,Lai,FarkasVerraC42}]
     \label{Thm unirazionality of Cd}
  Each irreducible component $\mathcal C_d$ of the Noether--Lefschetz locus in $\mathcal{C}$
 is {unirational} if the discriminant $d$ is at most $44$.
 \end{theorem}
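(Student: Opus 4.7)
The plan is to establish unirationality component-by-component, using formula \eqref{discriminante caso cubic fourfolds} as the basic computational tool. The list of admissible discriminants below $44$ is
\[
d \in \{8,12,14,18,20,24,26,30,32,36,38,42,44\},
\]
so there are thirteen components to handle. For each such $d$ I would exhibit an explicit algebraic surface $S \subset \PP^5$ (typically rational, or a ruled/elliptic scroll, or a smooth projected surface) whose numerical invariants $(\deg S, g(S), \chi(\mathcal O_S), K_S^2, \delta)$, when substituted into \eqref{autointersezione caso cubic fourfolds} and \eqref{discriminante caso cubic fourfolds}, yield exactly the target discriminant $d$. For example, planes give $d=8$, cubic scrolls give $d=12$, quartic scrolls and quintic del Pezzo surfaces give $d=14$, Veronese surfaces give $d=20$, and so on; this part is largely bookkeeping once one has a candidate list.

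The key unirationality step is then the following fibration argument. Consider the incidence variety
\[
\mathcal{I}_d = \bigl\{\,(S,X) : S \subset X \subset \PP^5,\ X \text{ a cubic fourfold}\,\bigr\}
\]
restricted to the chosen family of surfaces $S$. The projection $\mathcal{I}_d \to \{\text{surfaces } S\}$ is a (Zariski-locally trivial) projective bundle, whose fibre over $[S]$ is the linear system of cubics in the ideal of $S$; provided this linear system is nonempty and a general member is smooth, the other projection $\mathcal{I}_d \to \mathcal{C}_d$ is dominant. Hence unirationality of $\mathcal{C}_d$ is reduced to unirationality of a component of the Hilbert scheme parametrizing the chosen $S$. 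For most of the cases above, this Hilbert scheme is visibly rational, being birational to a Grassmannian or a projective bundle over one (e.g.\ scrolls are parametrized by matrices of linear forms; projected surfaces by choices of projection centre). The check that the image actually fills the targeted component of the Noether--Lefschetz locus (and not some deeper stratum) amounts to a dimension count combined with the observation that the Hodge-theoretic label computed from $(S,X)$ is precisely $d$, via the formulas in \S\ref{Section formulas discriminant}.

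The main obstacle, as is well documented, is the component $\mathcal C_{42}$: there does not seem to be a family of surfaces directly in $\PP^5$ that is simultaneously visibly unirational and big enough to dominate $\mathcal C_{42}$, so the naive strategy breaks down. The approach I would follow here, and which is what Farkas--Verra carry out in \cite{FarkasVerraC42}, is indirect: one uses the relation of $\mathcal C_{42}$ with the moduli space of polarized K3 surfaces of genus $20$ (the associated K3 for this discriminant), and then constructs a unirational parameterization of that moduli space via a Mukai-style model, transferring unirationality back to $\mathcal C_{42}$ through the Hassett correspondence. The remaining cases with $d$ close to $44$ (in particular $d=38$ and $d=44$) can be handled along the direct lines of the first paragraph, following Lai's constructions, once one identifies the correct Hilbert scheme component of surfaces. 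Assembling these thirteen case analyses then proves the theorem.
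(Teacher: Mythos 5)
This theorem is not proved in the paper; it is quoted from \cite{Nuer,Lai,FarkasVerraC42}, with the surrounding text explaining that Nuer's incidence-variety method handles every admissible $d\leq 44$ except $d=42$, and that $d=42$ required the later work of Lai and of Farkas--Verra. Your general strategy for the bulk of the cases --- exhibit an explicit surface $S\subset\PP^5$ realizing the discriminant via \eqref{autointersezione caso cubic fourfolds} and \eqref{discriminante caso cubic fourfolds}, fibre the incidence variety over a visibly unirational Hilbert-scheme component, and check dominance by a parameter count --- is exactly the Nuer/Lai mechanism, and it is the same mechanism the paper formalizes in Proposition~\ref{Prop parameterCount}. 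Up to that point your sketch is fine.

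The genuine gap is in your treatment of $\mathcal C_{42}$, which is precisely the one case the easy argument does not cover. First, the associated K3 surface for $d=42$ has genus $\frac{42}{2}+1=22$, not $20$ (genus $20$ belongs to $d=38$). Second, and more seriously, you have the logical direction of the Farkas--Verra argument backwards: there is no known ``Mukai-style'' unirational model of $\mathcal F_{22}$ that one could feed through the Hassett correspondence $\varphi_{42}$ to deduce unirationality of $\mathcal C_{42}$. The unirationality of $\mathcal F_{22}$ was itself open, and Farkas--Verra prove it as a \emph{consequence} of the unirationality of the incidence correspondence $\mathfrak X_{42}$ of pairs (8-nodal nonic scroll $R\subset\PP^5$, cubic through $R$), which dominates $\mathcal C_{42}$ directly --- see Remark~\ref{rmk unirationality F14 and F22}. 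In other words, $d=42$ is handled by the same direct surface-parametrization strategy as the other discriminants (with nodal scrolls, whose Hilbert scheme requires real work to show unirational), not by transferring unirationality from the K3 side; as written, your argument for this case is circular. A minor further point: $d=38$ and $d=44$ are already in Nuer's list, so attributing them to Lai is an inaccuracy, though not a mathematical one.
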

 On the other side we have the following.
 \begin{theorem}[\cite{TaVaA}] \hspace{1pt}
     \label{non unirazionalita di Cd}
  \begin{itemize} 
   \item The component $\mathcal C_d$ 
   is of \emph{general type} 
   for any $d\geq 114$,
 with the possible exceptions $d\in\{120, 122, 128, 132, 138, 150, 152, 180, 192\}$.
   \item The Kodaira dimension $\kappa(\mathcal C_d)$ is {non-negative} for 
    any $d\geq 86$,
   with the possible exceptions $d\in\{90, 92, 96, 108, 120, 132, 180\}$.
  \end{itemize}
 \end{theorem}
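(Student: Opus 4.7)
The plan is to realize each $\mathcal C_d$ as an orthogonal modular variety and then apply the low weight cusp form trick of Gritsenko--Hulek--Sankaran to produce enough pluricanonical forms. Recall that $\mathcal C_d = \Gamma_d \backslash \mathcal D_d$, where $\mathcal D_d$ is a 19-dimensional type IV bounded symmetric domain associated with a lattice $L_d$ of signature $(2,19)$ (the orthogonal complement of the rank-$2$ sublattice of discriminant $d$ inside the primitive cohomology $H^4(X,\ZZ)_0$ of a cubic fourfold), and $\Gamma_d \subset O(L_d)$ is the corresponding stable orthogonal group. Under this identification, weight-$k n$ automorphic forms on $\mathcal D_d$ that are $\Gamma_d$-invariant and vanish to sufficient order along the ramification divisor of $\mathcal D_d \to \mathcal C_d$ produce pluricanonical sections on a toroidal compactification of $\mathcal C_d$, so controlling Kodaira dimension reduces to constructing such automorphic forms.

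First I would establish, using results of Laza and Looijenga on the global Torelli theorem and the period map for cubic fourfolds, that the branch divisor of $\mathcal D_d \to \mathcal C_d$ is a union of Heegner divisors of controlled type, and that the singularities of $\mathcal C_d$ at the image of this divisor are canonical (so that they do not obstruct extending pluricanonical forms, as in the Reid--Shepherd-Barron--Tai criterion). This step uses the classification of reflections in $\Gamma_d$ and a careful check of the Tai sum at each fixed locus; the exceptional discriminants in the statement are precisely the finite list of $d$ for which this canonical-singularity check currently fails.

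Next I would implement the low weight cusp form trick: producing a cusp form $F$ on $\mathcal D_d$ of weight $w < 19$ that vanishes along the ramification divisor yields enough pluricanonical sections to conclude that $\mathcal C_d$ is of general type, and producing any nonzero holomorphic form of weight $\le 19$ suffices to conclude $\kappa(\mathcal C_d) \geq 0$. To obtain $F$, I would take the quasi-pullback of the Borcherds form $\Phi_{12}$, a reflective cusp form of weight $12$ on the even unimodular lattice $II_{2,26}$. Given a primitive embedding $L_d \hookrightarrow II_{2,26}$ with orthogonal complement $K_d$ of signature $(0, 7)$, the quasi-pullback is a cusp form on $\mathcal D_d$ of weight $12 + N(K_d)/2$, where $N(K_d) = \#\{v \in K_d : v^2 = -2\}$. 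The general type criterion becomes $N(K_d) < 14$, while nonvanishing of a weight-19 form requires $N(K_d) \le 14$; arranging the embedding so that $K_d$ has few $(-2)$-vectors is a lattice-theoretic problem depending on the genus symbol of $L_d$.

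The main obstacle is thus the combinatorial number theory: for each discriminant $d$ in the stated range, one must exhibit a primitive embedding $L_d \hookrightarrow II_{2,26}$ minimizing the number of roots in the orthogonal complement, and verify that the bound $N(K_d) < 14$ (resp.\ $\le 14$) holds. This is done by leveraging Nikulin's theory of primitive embeddings of even lattices to reduce the existence question to a computation on genus symbols, then carrying out a discriminant-by-discriminant inspection of the resulting root counts. The exceptional lists in the theorem are exactly those finitely many $d$ for which every available embedding produces $K_d$ with too many roots, so the bound fails only on an explicit, finite set; for all $d$ outside this set, the quasi-pullback construction yields the desired automorphic form and hence the stated Kodaira dimension conclusion.
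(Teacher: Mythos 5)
This theorem is not proved in the paper at all: it is quoted verbatim from Tanimoto--V\'arilly-Alvarado \cite{TaVaA}, so there is no in-paper argument to compare against. Your sketch is, however, an essentially faithful outline of the strategy of that reference: realize $\mathcal C_d$ (via the period map, using Laza--Looijenga to control the image) as an orthogonal modular variety for a signature-$(2,19)$ lattice, invoke the Gritsenko--Hulek--Sankaran low-weight cusp form trick, and produce the required form as the quasi-pullback of the Borcherds form $\Phi_{12}$ along a primitive embedding $L_d\hookrightarrow II_{2,26}$, with weight $12+N(K_d)/2$ controlled by the number of roots in the rank-$7$ orthogonal complement; the thresholds $N(K_d)<14$ and $N(K_d)\le 14$ for general type and for $\kappa\ge 0$ are the correct numerics in dimension $19$.

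One point in your write-up is internally inconsistent and, in its first formulation, wrong: you initially attribute the exceptional lists $\{120,122,\ldots\}$ and $\{90,92,\ldots\}$ to a failure of the canonical-singularities (Reid--Tai) check. That check is handled uniformly for orthogonal modular varieties of dimension at least $9$ by the Gritsenko--Hulek--Sankaran results and does not produce the exceptions. Your later statement is the correct one: the exceptional discriminants are those $d$ for which the lattice-theoretic step --- exhibiting an embedding whose orthogonal complement has few enough roots, which in \cite{TaVaA} is carried out via estimates on representation numbers of quadratic forms rather than a purely case-by-case inspection --- could not be verified. With that correction, the proposal matches the actual proof of the cited result.
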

 In the case of GM fourfolds we have the following weaker results.
 \begin{theorem}[\cite{famGushelMukai}]\label{uniruledness-GM}
 Each irreducible component of the Noether--Lefschetz locus in $\mathcal{GM}$
 is {uniruled} if the discriminant $d$ is at most $26$;
  moreover, $\mathcal{GM}_{10}'$, $\mathcal{GM}_{10}^{''}$, $\mathcal{GM}_{12}$, and $\mathcal{GM}_{20}$ are {unirational}.
 \end{theorem}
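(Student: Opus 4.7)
The plan is to handle each of the eight components $\mathcal{GM}_{10}', \mathcal{GM}_{10}'', \mathcal{GM}_{12}, \mathcal{GM}_{16}, \mathcal{GM}_{18}, \mathcal{GM}_{20}, \mathcal{GM}_{24}, \mathcal{GM}_{26}$ separately by exhibiting, for each, a family of pairs $(S,X)$ with $S\subset X\subset\YY$, where $S$ is a surface of suitably prescribed type (e.g.\ a $\tau$-quadric as in the Code Example for $\mathcal{GM}_{10}'$, a quintic del Pezzo for $\mathcal{GM}_{10}''$, a cubic scroll for $\mathcal{GM}_{12}$, and analogous low-degree rational or weakly rational surfaces for the remaining five). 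For each chosen type, I would compute the invariants $\deg S$, $g(S)$, $\chi(\mathcal{O}_S)$, $K_S^2$, $\delta$, $a$, $b$ and plug them into \eqref{autointersezioneGM}--\eqref{discriminanteGM}, verifying that the resulting discriminant equals the target $d$; in the $d\equiv 2\pmod 8$ cases I would further track the parities of $a+b$ and $b$ as in \cite[Corollary~6.3]{DIM} to distinguish $\mathcal{GM}_{10}'$ from $\mathcal{GM}_{10}''$.

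For each component I would then form the incidence correspondence $\mathcal{I}=\{(S,X):S\subset X\subset \YY\}$ equipped with projections $p:\mathcal{I}\to\mathcal{H}$ to the relative Hilbert scheme of such surfaces inside varying Del Pezzo fivefolds, and $q:\mathcal{I}\to\mathcal{GM}_d$. The fiber of $p$ over $[S]$ is the projectivization of the linear system of quadrics on $\YY$ vanishing on $S$; when it has positive dimension it is a projective space, hence rational. To establish dominance of $q$ onto $\mathcal{GM}_d$ I would balance dimensions as $\dim \mathcal{H} + \dim(\text{fiber of } p) = 23 + \dim(\text{fiber of } q)$, using the explicit numerical values supplied by examples produced with the \emph{SpecialFanoFourfolds} package, and propagate the conclusion to a dense open of $\mathcal{GM}_d$ by semicontinuity. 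For uniruledness this is already enough: the positive-dimensional projective-space fibers of $p$ are sent by $q$ to rational curves sweeping out $\mathcal{GM}_d$. For the four stronger unirationality claims concerning $\mathcal{GM}_{10}'$, $\mathcal{GM}_{10}''$, $\mathcal{GM}_{12}$, and $\mathcal{GM}_{20}$, I would additionally verify that $\mathcal{H}$ itself is rational (or at least unirational); for $\tau$-quadrics, for instance, $\mathcal{H}$ fibers over a homogeneous space of $\mathrm{PGL}(5)$ acting on $\GG(1,4)$ with rational fibers, and analogous arguments should cover the other three types.

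The main obstacle will be unirationality of $\mathcal{H}$ in the four stronger cases: uniruledness follows almost mechanically from the dimension count plus the $\PP^n$-structure on the fibers of $p$, whereas rationality of the parameter space of, say, quintic del Pezzos inside a Del Pezzo fivefold is not formal and typically requires an explicit Cremona-type or bundle-theoretic presentation of the Hilbert scheme. A secondary difficulty is to ensure that the constructed fourfolds actually land in the component of expected dimension $23$ and not in some unexpected subcomponent: concretely, one must check that on the generic member $X$ of the family the surface $S$ has \emph{exactly} the Hodge class predicted by \eqref{discriminanteGM}, with no additional algebraic cycles enlarging the sublattice of $H^{2,2}(X,\ZZ)$ generated by $(\gamma_X^\ast \sigma_{1,1},\gamma_X^\ast \sigma_2,[S])$. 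This rigidity can be checked by degeneration to the explicit examples computed by the package and semicontinuity of Picard-type invariants.
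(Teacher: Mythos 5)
Your proposal follows essentially the same route as the paper (and as \cite{famGushelMukai}, which this theorem cites): exhibit, for each component, an explicit pair $(S,X)$ with $S\subset X\subset\YY$, verify the discriminant via \eqref{autointersezioneGM}--\eqref{discriminanteGM}, run the parameter count of Proposition~\ref{Prop parameterCount} to see that the quadrics through surfaces of the family $\mathcal S$ sweep out a hypersurface in $\PP(H^0(\mathcal O_{\YY}(2)))\simeq\PP^{39}$, get uniruledness from the pencils of quadrics through a fixed $S$ (i.e.\ $h^0(\mathcal I_{S/\YY}(2))>1$), and get unirationality in the four stronger cases from unirationality of the relevant Hilbert-scheme component. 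Two remarks. First, a bookkeeping slip: for $d\leq 26$ there are \emph{ten} components, not eight, since $18\equiv 26\equiv 2\pmod 8$, so $\mathcal{GM}_{18}$ and $\mathcal{GM}_{26}$ each split as $\mathcal{GM}_d'\cup\mathcal{GM}_d^{''}$ and require separate surfaces with the appropriate parities of $a+b$ and $b$ (compare the distinct rows for $\mathcal{GM}_{18}'$ and $\mathcal{GM}_{18}^{''}$ in Table~\ref{Table uniruledness GMd d small}); your parity-tracking remark shows you know how to do this, but your enumeration omits it. Second, your ``secondary difficulty'' about extra algebraic cycles enlarging the lattice is not actually needed, and the proposed fix via specialization is shaky (Picard-type ranks jump \emph{up} under specialization, so degenerating to a special example cannot bound them from above): since the Noether--Lefschetz locus is a countable union of hypersurfaces, any irreducible locus of codimension $\leq 1$ contained in it is automatically dense in one of those hypersurfaces, so the codimension bound from the parameter count together with $d\neq 0$ already identifies the image as a full component.
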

 In Theorem~\ref{estensioneTHMruled}
 we will extend Theorem~\ref{uniruledness-GM} by including the case $d=28$.
\begin{remark}
From the main result of \cite{Ma2018-oc} (see also \cite{petok2021kodaira}),
 it follows that 
only a finite number of components of the  
Noether--Lefschetz locus in $\mathcal{GM}$
are unirational (in such case the discriminant is at most $224$).
Nevertheless, it does not yet seem clear if the same holds true 
about the uniruledness.
\end{remark}
Theorem~\ref{uniruledness-GM} is proved in \cite{famGushelMukai} by constructing very particular 
examples of Hodge-special GM fourfolds; more precisely, examples of
pairs $(S,X)$, where $S\subset \YY$ is a smooth irreducible surface in a smooth Del Pezzo fivefold $\YY$,
and $[X]\in\PP(H^0(\mathcal{O}_{\YY}(2)))\simeq\PP^{39}$ is a smooth hyperquadric in $\YY$ containing $S$ (some of the most relevant examples are included
 in Table~\ref{Table uniruledness GMd d small}). 
Then, 
by applying Proposition~\ref{Prop parameterCount} below,
which can be done automatically with 
the function \href{https://faculty.math.illinois.edu/Macaulay2/doc/Macaulay2/share/doc/Macaulay2/SpecialFanoFourfolds/html/_parameter__Count.html}{\texttt{parameterCount}},
we deduce that 
there exists a family $\mathcal S\subset \mathrm{Hilb}_{\YY}^{\chi(\mathcal{O}_S(t))}$ 
of surfaces with $[S]\in \mathcal S$ such that
the closure of the  locus of smooth hyperquadrics in $\YY$
      containing some 
      surface of $\mathcal S$
      describes an irreducible component of the Noether--Lefschetz locus in $\mathcal{GM}$.
      This component is uniruled since it is covered by curves birational to pencils
      of GM fourfolds through surfaces of $\mathcal S$ (indeed we have $h^0(\mathcal{I}_{S/\YY}(2))>1$ for each $[S]\in\mathcal S$).
\begin{proposition}[Count of parameters]
\label{Prop parameterCount}
Let $\mathbb{V}$ be either $\PP^5$ or a Del Pezzo fivefold 
 $\YY=\GG(1,4)\cap \PP^8$, and 
let $r$ be respectively equal to  $3$ and $2$.
Let $S\subset \mathbb V$ be a smooth irreducible surface
which is contained in a smooth hypersurface $X\subset\mathbb{V}$ of degree $r$.
Assume that 
\begin{enumerate}
\item $h^1(N_{S/\mathbb{V}})=0$, and
\item $h^1(\mathcal O_S(r))=0$ and $h^0(\mathcal{I}_{S/\mathbb{V}}(r))=h^0(\mathcal{O}_{\mathbb V}(r))-\chi(\mathcal O_S(r))$.
\end{enumerate}     
Then 
there is a unique irreducible component $\mathcal S\subset\mathrm{Hilb}(\mathbb V)$ 
of the Hilbert scheme 
of $\mathbb V$ that contains $[S]$, and the family $\mathfrak X_{\mathcal S}\subset \PP(H^0(\mathcal O_{\mathbb V}(r)))$ of 
the hypersurfaces in $\mathbb V$ of degree $r$ containing some surface 
of the family $\mathcal S$ has codimension at most 
\begin{equation*}
\dim(\mathbb{P}(H^0(\mathcal O_{\mathbb V}(r)))) - \left(h^0(N_{S/\mathbb V}) + h^0(\mathcal I_{S/\mathbb V}(r)) - h^0(N_{S/X}) -1\right).
\end{equation*}
Furthermore,  if this last value is $1$ and by applying the formulas in Subsection~\ref{Section formulas discriminant}
 we get a non-zero value of $d$,
then $\mathfrak X_{\mathcal S}$ is a hypersurface; 
after passing to the quotient modulo  $\mathrm{Aut}(\mathbb V)$,
this  gives rise to an irreducible component 
of the Noether--Lefschetz locus 
parameterizing fourfolds of discriminant $d$.
\end{proposition}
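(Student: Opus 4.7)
\bigskip\noindent\textbf{Proof plan.}
The strategy is a standard incidence-variety / parameter-count argument. First I would use hypothesis (1) to produce the component $\mathcal S$: since $h^1(N_{S/\mathbb V})=0$, the Hilbert scheme $\mathrm{Hilb}(\mathbb V)$ is smooth at $[S]$, of dimension $h^0(N_{S/\mathbb V})$, so there is a unique irreducible component $\mathcal S$ through $[S]$ and it is generically reduced.

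Next I would set up the incidence variety
$$\mathcal I\;=\;\{\,([S'],[X'])\in \mathcal S\times \PP(H^0(\mathcal O_{\mathbb V}(r)))\,:\,S'\subset X'\,\}$$
together with its projections $\pi_1\colon\mathcal I\to\mathcal S$ and $\pi_2\colon\mathcal I\to\mathfrak X_{\mathcal S}$. The fibre of $\pi_1$ over $[S']$ is $\PP(H^0(\mathcal I_{S'/\mathbb V}(r)))$. Using the exact sequence $0\to\mathcal I_{S/\mathbb V}(r)\to\mathcal O_{\mathbb V}(r)\to\mathcal O_S(r)\to 0$, assumption (2) precisely says that $H^0(\mathcal O_{\mathbb V}(r))\to H^0(\mathcal O_S(r))$ is surjective, i.e.\ $h^0(\mathcal I_{S/\mathbb V}(r))=h^0(\mathcal O_{\mathbb V}(r))-\chi(\mathcal O_S(r))$. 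Since $\chi(\mathcal O_{S'}(r))$ is locally constant on $\mathcal S$, and both $h^1(\mathcal O_{S'}(r))=0$ and the surjectivity of the restriction are open conditions by semicontinuity, the fibre dimension is constant on a neighbourhood of $[S]$, so
$$\dim\mathcal I\;=\;h^0(N_{S/\mathbb V})+h^0(\mathcal I_{S/\mathbb V}(r))-1.$$

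For $\pi_2$, the fibre over $[X']$ parameterises surfaces in $X'$ deforming $S$, which is cut out inside $\mathcal S$ by the vanishing of a section, and whose tangent space at $[S]$ is $H^0(N_{S/X})$. Hence the fibre through $[S]$ has dimension at most $h^0(N_{S/X})$, giving
$$\dim\mathfrak X_{\mathcal S}\;\geq\;h^0(N_{S/\mathbb V})+h^0(\mathcal I_{S/\mathbb V}(r))-h^0(N_{S/X})-1,$$
which is the codimension bound claimed. For the last assertion, if the bound is tight and equal to $1$, then $\mathfrak X_{\mathcal S}$ is a hypersurface in $\PP(H^0(\mathcal O_{\mathbb V}(r)))$; passing to the $\mathrm{Aut}(\mathbb V)$-quotient it remains of codimension $1$ in the moduli space, and the formulas \eqref{discriminante caso cubic fourfolds} and \eqref{discriminanteGM} applied to $(S,X)$ yield the Hodge-theoretic discriminant $d$. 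As long as $d\neq 0$, this means the generic $[X']\in\mathfrak X_{\mathcal S}$ contains a surface whose class is not proportional to the ambient polarisation(s), hence $[X']$ lies in a well-defined component of the Noether--Lefschetz hypersurface $\mathcal C_d$ (resp.\ $\mathcal{GM}_d$).

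The routine part is the dimension estimate for $\pi_1$; the mildly delicate points are (i) verifying that the open conditions of assumption (2) propagate to nearby $[S']\in\mathcal S$ rather than just holding at $[S]$, so that $\dim\mathcal I$ is correctly computed, and (ii) bounding the generic fibre of $\pi_2$ by $h^0(N_{S/X})$ (a tangent-space upper bound, which is sufficient for the inequality but not the equality). The main subtlety is the final paragraph: one must ensure that the image of $\mathfrak X_{\mathcal S}$ in moduli is not swallowed by a larger Noether--Lefschetz component, which is exactly what the nonzero value of the discriminant $d$ guarantees via the lattice computations of Subsection~\ref{Section formulas discriminant}.
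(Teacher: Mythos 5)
Your proposal is correct and follows essentially the same route as the paper's sketch: smoothness of $\mathrm{Hilb}(\mathbb V)$ at $[S]$ from $h^1(N_{S/\mathbb V})=0$, the incidence correspondence with the fibre of $\pi_1$ computed via condition (2) and semicontinuity, the tangent-space bound $h^0(N_{S/X})$ on the fibre of $\pi_2$, and the nonzero discriminant forcing $\mathfrak X_{\mathcal S}$ to be a proper (hence codimension-one) subvariety. The only cosmetic difference is that the paper isolates the unique component $\mathcal Z$ of the incidence variety dominating $\mathcal S$ before computing dimensions, whereas you write $\dim\mathcal I$ outright; for the lower bound on $\dim\mathfrak X_{\mathcal S}$ this is harmless.
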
   
\begin{proof}[Sketch of the proof] See 
     also \cite{Nuer} and \cite[Subsection~1.5]{famGushelMukai}.
     By the condition (1), we deduce that the surface $S$ 
     corresponds to a smooth point $[S]$ of the Hilbert scheme $\mathrm{Hilb}(\mathbb V)$. Therefore 
     there exists a unique irreducible component $\mathcal S$ of $\mathrm{Hilb}(\mathbb V)$ that contains $[S]$.
     Let \[\mathcal{X}_{\mathcal S}=\{([S'],[X']):S'\subset X'\}\subset \mathcal{S} \times \PP(H^0(\mathcal{O}_{\mathbb V}(r)))\] be the incidence correspondence.
     The fiber at a point $[S']\in\mathcal S$ 
     of the first projection $\pi_1:\mathcal{X}_{\mathcal S}\to \mathcal S$ 
     is isomorphic to the linear space $\PP(H^0(\mathcal{I}_{S'/\mathbb{V}}(r)))$. 
     By the semicontinuity theorem, we have that the dimension 
     of the fiber of $\pi_1$ achieves its minimum value on an open set of $\mathcal S$,
     and by the condition (2) it follows that the point $[S]$ belongs to this open set.
     Therefore there exists a unique irreducible component $\mathcal Z$ of $\mathcal X_{\mathcal S}$
     that dominates $\mathcal S$, and its dimension is equal to 
     \[\dim(\mathcal Z)=\dim(\mathcal S)+h^0(\mathcal{I}_{S/\mathbb{V}}(r))-1=
     h^0(N_{S/\mathbb V})+h^0(\mathcal{I}_{S/\mathbb{V}}(r))-1 . \]
Now the fiber at the point $[X]\in \mathbb{P}(H^0(\mathcal O_{\mathbb V}(r)))$ 
of the restriction to $\mathcal Z$ of the second projection $\pi_2|_{\mathcal Z}:\mathcal Z\subseteq\mathcal{X}_{\mathcal S}\to \mathbb{P}(H^0(\mathcal O_{\mathbb V}(r)))$
contains the point $[S,X]$,
    and we have $\dim_{[S,X]}(\pi_2|_{\mathcal Z}^{-1}([X]))\leq 
    \dim_{[S]}(\mathrm{Hilb}(X))\leq h^0(N_{S/X})
    $. 
    By semicontinuity, we deduce 
    that the generic fiber of $\pi_2|_{\mathcal Z}$ has dimension at most $h^0(N_{S/X})$.
    Thus we have
    \[\dim(\pi_2(\mathcal X_{\mathcal S}))\geq 
       \dim(\pi_2(\mathcal Z))\geq 
    \dim(\mathcal Z)-h^0(N_{S/X}) . \]
    Finally, if from the formulas in Subsection~\ref{Section formulas discriminant}
    we get a non-zero value of $d$, then we deduce that 
$\pi_2(\mathcal X_{\mathcal S})$ 
 does not fill the whole space since 
 its image in the corresponding moduli space is contained in the Noether--Lefschetz locus.
\end{proof}  

\subsection{Finding new examples of Hodge-special fourfolds}
We 
present 
 a slight simplification of a construction used in \cite{famGushelMukai}
that allows us to find new examples of Hodge-special GM fourfolds
and extend the main result obtained there.
We start by fixing 
a birational transformation
$\psi:\PP^6\dashrightarrow\GG(1,4)\subset\PP^9$,
which we choose to be the 
inverse of the projection from a $\sigma_{2,2}$-plane, that is, the
transformation defined 
by the linear system of quadrics
through a Segre threefold
$\Sigma_3\simeq\PP^1\times\PP^2\subset \PP^5\subset \PP^6$.
Now we want to take some good surface $S\subset\mathbb{G}(1,4)$ (possibly smooth and cut out by quadrics),
obtained as the image via $\psi$ of some  surface $T\subset\PP^6$,
and get a GM fourfold as the transversal intersection of $\GG(1,4)$ 
with a hyperplane and a hyperquadric through $S$.
If $T$
does not cut the base locus $\Sigma_3$ of $\psi$, 
we typically obtain uninteresting surfaces in $\GG(1,4)$ 
which are neither contained in a hyperplane of $\PP^9$ nor in a hyperquadric 
of $\GG(1,4)$. Therefore we take surfaces $T\subset \PP^6$
 together with an automorphism $\sigma$ 
of $\PP^6$ sending  a curve $C$ on $T$ to another curve on $\Sigma_3$. 
By replacing $T$ by $\sigma(T)$, we obtain 
a surface cutting $\Sigma_3$ at least along a curve,
and we can hope the image $S=\overline{\psi(T)}$
is good enough for our purposes.
In practice, this
can be achieved automatically only if
 $T$ is some simple rational surface 
 such that we are able to find curves  $C\subset T$ of low degree and low genus. 
So we restrict ourselves to a simpler case.
Let $T=T(a;i,j,k,\ldots)\subset\PP^6$ be the  rational surface 
obtained as the image of the plane via the linear system 
of curves of degree $a$ having 
$i$ general base points $p_1^1,\ldots,p_i^1$ of multiplicity $1$,
$j$ general base points $p_1^2,\ldots,p_j^2$ of multiplicity $2$,
$k$ general base points $p_1^3,\ldots,p_k^3$ of multiplicity $3$,
and so on. Let $C=C(e;l,m,n,\dots)\subset T$ be the curve which is represented in the plane 
by a general curve of degree $e$ passing through 
$l$ of the $i$ points $p_1^1,\ldots,p_i^1$, $m$ 
of the $j$ points $p_1^2,\ldots,p_j^2$, 
$n$ of the $k$ points $p_1^3,\ldots,p_k^3$, and so on.
Suppose we are able to get an automorphism $\sigma$ of $\PP^6$ 
which sends $C$ into a curve $\sigma(C)\subset \Sigma_3$, and that 
the image 
$S=\overline{\psi(\sigma(T))}\subset\GG(1,4)$ is a surface 
contained in a (smooth) GM fourfold. We 
denote by $\mathcal G^{a,i,j,k,\ldots}_{e,l,m,n,\ldots}$ 
the GM fourfold 
obtained as the intersection of $\GG(1,4)$ with a general hyperplane and 
a general hyperquadric through $S$ (we leave out the dependence on $\sigma$).

\subsubsection{Running the construction}
Using the package \emph{SpecialFanoFourfolds}
we can perform the above construction 
of the GM fourfold $\mathcal G^{a,i,j,k,\ldots}_{e,l,m,n,\ldots}$
by giving just one command:
\begin{tcolorbox}[breakable=true,boxrule=0.5pt,opacityback=0.1,enhanced jigsaw]  
\begin{center}
\href{https://faculty.math.illinois.edu/Macaulay2/doc/Macaulay2/share/doc/Macaulay2/SpecialFanoFourfolds/html/_special__Gushel__Mukai__Fourfold_lp__Array_cm__Array_cm__String_cm__Thing_rp.html}{\texttt{specialGushelMukaiFourfold([a,i,j,k,...],[e,l,m,n,...])}}
\end{center}
\end{tcolorbox}
\noindent 
Optionally, we can specify the coefficient ring as second argument (a large finite field is used by default).
Everything is done automatically, including the searching of the automorphism 
$\sigma$
and the needed checks on the surface $S=\overline{\psi(\sigma(T))}$.
If no error message occurs, then we can conclude that the GM fourfold 
$\mathcal G^{a,i,j,k,\ldots}_{e,l,m,n,\ldots}$ exists and,
most importantly, has been successfully created.
We refer to the online documentation for more details.
See Tables~\ref{Table uniruledness GMd d small} ,\ref{Table uniruledness GM28}, \ref{Table K3 associated with GMs}, and \ref{Table K3 associated with GMs (further examples)} for some examples where this procedure works well.
These examples have been found by another function provided by the package (available only in debug mode) that automatically scans many combinations 
of pairs of lists of integers
$((a,i,j,k,\ldots),(e,l,m,n,\ldots))$.

\subsubsection{Explicit cubic fourfolds of high discriminant}
As an application, we can construct Hodge-special cubic fourfolds of high discriminant.
This is just one of the examples.
The projection of the surface $S\subset\YY$
     as in the second row of Table~\ref{Table K3 associated with GMs (further examples)}
     from a general plane in $\YY$ of type $\sigma_{2,2}$
     yields a surface $R\subset\PP^5$ of degree $14$ 
     and sectional genus $7$,
     cut out by $3$ cubics, $6$ quartics, and $2$ quintics,
     having $15$ nodes as the only singularities,
     and $S$ as its normalization.
     A general cubic fourfold $Z\subset\PP^5$ containing $R$ is smooth, 
     and by \eqref{autointersezione caso cubic fourfolds} 
     and \eqref{discriminante caso cubic fourfolds}
     we deduce that $[Z]\in\mathcal{C}_{86}$.
     This works in other cases. For instance, 
     starting from the surface $S\subset\YY$ 
     as in the fifth row of Table~\ref{Table K3 associated with GMs},
     we get a cubic fourfold in $\mathcal{C}_{62}$.

\subsection{Geometric description of the component \texorpdfstring{$\mathcal{GM}_{28}$}{GM28}}
We point out that the existence of examples of GM fourfolds 
as in Table~\ref{Table uniruledness GM28} 
extends Theorem~\ref{uniruledness-GM} to another case.
More precisely, we have the following.
\begin{theorem}
\label{estensioneTHMruled}
The component $\mathcal{GM}_{28}$ is uniruled. In particular, 
each irreducible component of the Noether--Lefschetz locus in $\mathcal{GM}$
has negative Kodaira dimension
 if the discriminant $d$ 
 is at most $28$.
\end{theorem}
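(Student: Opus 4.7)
The plan is to reduce to proving that $\mathcal{GM}_{28}$ is uniruled: the second assertion then follows by combining this with Theorem~\ref{uniruledness-GM}, since a uniruled variety has negative Kodaira dimension. Since $28\equiv 4\pmod 8$, the component $\mathcal{GM}_{28}$ is irreducible, so there is a single component to handle. My strategy will follow the template sketched in the paragraph after Theorem~\ref{uniruledness-GM}: exhibit an explicit pair $(S,X)$ produced by the $\mathcal G^{a,i,j,k,\ldots}_{e,l,m,n,\ldots}$ construction (as tabulated in Table~\ref{Table uniruledness GM28}) and verify the hypotheses of Proposition~\ref{Prop parameterCount}.

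First I would fix such a pair $(S,X)$ from Table~\ref{Table uniruledness GM28}, where $S\subset\YY$ is a smooth irreducible surface and $X$ is a smooth GM fourfold containing $S$. I would then compute the invariants $\deg(S)$, $g(S)$, $K_S^2$, $\chi(\mathcal O_S)$, and the Schubert coefficients $(a,b)$ of $\gamma_{X\ast}(S)$, plug them into \eqref{autointersezioneGM} and \eqref{discriminanteGM}, and check that the resulting discriminant is $d=28$. Next I would verify the cohomological hypotheses of Proposition~\ref{Prop parameterCount}, namely $h^1(N_{S/\YY})=0$, $h^1(\mathcal O_S(2))=0$ and $h^0(\mathcal I_{S/\YY}(2))=h^0(\mathcal O_{\YY}(2))-\chi(\mathcal O_S(2))$, and check that the codimension bound supplied by the proposition equals $1$. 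Together with the non-zero value $d=28$, the proposition then produces an irreducible component of the Noether--Lefschetz locus parametrising GM fourfolds of discriminant $28$, which by irreducibility of $\mathcal{GM}_{28}$ must coincide with $\mathcal{GM}_{28}$ itself.

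For the uniruledness conclusion, I would exploit the fact that for $[S']$ in an open subset of the family $\mathcal S$ one has $h^0(\mathcal I_{S'/\YY}(2))>1$, so the smooth GM fourfolds in $\YY$ containing $S'$ form a linear system of positive dimension. The associated pencils of GM fourfolds sweep out rational curves through a general point $[X']\in\mathcal{GM}_{28}$, covering the component and yielding uniruledness.

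The main obstacle is the verification step carried out on an explicit surface $S$: checking smoothness of $S=\overline{\psi(\sigma(T))}$, the existence of the automorphism $\sigma$ of $\PP^6$ matching the chosen curve $C\subset T$ onto $\Sigma_3$, and the cohomological vanishing conditions required by Proposition~\ref{Prop parameterCount}. These are precisely the steps automated by \emph{SpecialFanoFourfolds} through the commands \texttt{specialGushelMukaiFourfold} and \texttt{parameterCount}; once the computation returns successfully on any of the rows of Table~\ref{Table uniruledness GM28}, the argument is complete.
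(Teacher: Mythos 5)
Your proposal is correct and follows essentially the same route as the paper: the paper proves the theorem by running \texttt{parameterCount} on the pair $(S,X)$ from the last row of Table~\ref{Table uniruledness GM28} (Code~Example~\ref{code componente GM28}), verifying exactly the hypotheses of Proposition~\ref{Prop parameterCount} that you list, obtaining the codimension-$1$ bound, and deducing uniruledness from the pencils of hyperquadrics through a fixed surface of the family (since $h^0(\mathcal I_{S/\YY}(2))=12>1$). Your observations that $28\equiv 4\pmod 8$ forces irreducibility of $\mathcal{GM}_{28}$ and that the remaining cases $d\leq 26$ are covered by Theorem~\ref{uniruledness-GM} complete the argument just as in the paper.
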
  
The proof of this result follows from the computation performed in 
 Code~Example~\ref{code componente GM28}
below (see Proposition~\ref{Prop parameterCount}). 
Indeed it tells us that 
there exists a  generically smooth, $27$-dimensional, irreducible component $\mathcal S$ of the Hilbert scheme of 
a smooth Del Pezzo fivefold $\YY=\PP^8\cap\GG(1,4)$,
whose general point corresponds to a 
 smooth rational surface of degree 
$11$ and sectional genus $4$ and with class $6\sigma_{3,1}+5\sigma_{2,2}$
in $\GG(1,4)$.
The family of hyperquadrics in $\YY$ through some surface of $\mathcal S$
is a hypersurface in $\PP(H^0(\mathcal O_{\YY}(2)))\simeq\PP^{39}$,
which gives rise to the component $\mathcal{GM}_{28}$ by the formulas \eqref{autointersezioneGM} and \eqref{discriminanteGM}. Moreover, 
a general fourfold in $\mathcal{GM}_{28}$ contains only a finite number of surfaces of $\mathcal S$,
and  the family 
of hyperquadrics through a general $[S]\in\mathcal S$ is a projective space of dimension $11$.
Note, however, that we are unable to compute the generic members of $\mathcal S$ and of $\mathcal{GM}_{28}$.
\begin{codeexample}[Geometric description of $\mathcal{GM}_{28}$]
     \label{code componente GM28} In the code we construct a Hodge-special GM fourfold $(S,X)$ as 
    described in the last row of Table~\ref{Table uniruledness GM28}. 
    The function 
    \href{https://faculty.math.illinois.edu/Macaulay2/doc/Macaulay2/share/doc/Macaulay2/SpecialFanoFourfolds/html/_parameter__Count.html}{\texttt{parameterCount}} gives 
the three
    invariants 
    $h^0(\mathcal I_{S/\YY}(2))$, $h^0(N_{S/\YY})$, and $h^0(N_{S/X})$ reported in the table. 
\begin{tcolorbox}[breakable=true,boxrule=0.5pt,opacityback=0.1,enhanced jigsaw]     
 {\footnotesize
\begin{Verbatim}[commandchars=&!$]
&colore!darkorange$!i8 :$ &colore!darkorchid$!time$ X = &colore!bleudefrance$!specialGushelMukaiFourfold$([7,0,6,2],[2,0,5,0]);
     &colore!darkorchid$!-- used 10.3554 seconds$
&colore!circOut$!o8 :$ &colore!output$!ProjectiveVariety, GM fourfold containing a surface$
&colore!output$!     of degree 11 and sectional genus 4$
&colore!darkorange$!i9 :$ &colore!darkorchid$!time$ &colore!bleudefrance$!describe$ X
     &colore!darkorchid$!-- used 2.22773 seconds$
&colore!circOut$!o9 =$ &colore!output$!Special Gushel-Mukai fourfold of discriminant 28$
&colore!output$!     containing a surface in PP^8 of degree 11 and sectional genus 4$
&colore!output$!     cut out by 17 hypersurfaces of degree 2$
&colore!output$!     and with class in G(1,4) given by 6*s_(3,1)+5*s_(2,2)$
&colore!output$!     Type: ordinary$
&colore!darkorange$!i10 :$ &colore!darkorchid$!time$ X = &colore!bleudefrance$!parameterCount$(X,&colore!bleudefrance$!Verbose$=>&colore!airforceblue$!true$)
&colore!output$!      -- h^1(N_{S,Y}) = 0$
&colore!output$!      -- h^0(N_{S,Y}) = 27$
&colore!output$!      -- h^1(O_S(2)) = 0, and h^0(I_{S,Y}(2)) = 12 = h^0(O_Y(2)) - \chi(O_S(2));$
&colore!output$!      -- in particular, h^0(I_{S,Y}(2)) is minimal$
&colore!output$!      -- h^0(N_{S,X}) = 0$
&colore!output$!      -- codim{[X] : S ⊂ X ⊂ Y} <= 1$
      &colore!darkorchid$!-- used 1164.59 seconds$
&colore!circOut$!o10 =$ &colore!output$!(1, (12, 27, 0))$
\end{Verbatim}
} \noindent 
\end{tcolorbox}
\end{codeexample}

\subsection{Hodge-associated K3 surfaces}\label{Subsection associated K3}
For some infinitely many values of
 the discriminant $d$, 
a fourfold $[X]\in \mathcal C_d$ (resp., $[X]\in\mathcal{GM}_d$)
has a \emph{(Hodge-)associated K3 surface} of degree $d$ and genus $\frac{d}{2}+1$. 
We call such values $\mathcal C$-admissible (resp., $\mathcal{GM}$-admissible).
Table~\ref{TableAdmissible} reports 
the first $\mathcal C$- and $\mathcal{GM}$-admissible values;
see \cite[Theorem~1.0.2]{Has00} and \cite[Proposition~6.5]{DIM} for the precise definitions.
\begin{table}[htbp]
\centering
%\tabcolsep=1.5pt 
%\begin{adjustbox}{width=\textwidth}
\begin{tabular}{r|cccccccccccccccccc}
\hline 
  $\mathcal C$ &  8 &  & 12 & \cellcolor{green!20}\colorablu{14} &  & 18 & 20 & 24 & \cellcolor{green!20}\colorablu{26} &  & 30 & 32 & & 36 & \cellcolor{green!20}\colorablu{38} &  & \cellcolor{green!20}\colorablu{42} & 44 \\
 $\mathcal{GM}$ &  & \cellcolor{green!20}\colorablu{10} & 12 &  & 16 & 18 & \cellcolor{green!20}\colorablu{20} & 24 & \cellcolor{green!20}\colorablu{26} & 28 &  & 32 & \cellcolor{green!20}\colorablu{34} & 36 &  & 40 & 42 & 44 \\
 \hline 
 \\ \hline 
   $\mathcal C$ & 48 & 50 &  & 54 & 56 &  & 60 & \cellcolor{green!20}\colorablu{62} & & 66 & 68 & 72 & \cellcolor{green!20}\colorablu{74} & & \cellcolor{green!20}\colorablu{78} & 80 & & 84 \\
   $\mathcal{GM}$ & 48 & \cellcolor{green!20}\colorablu{50} & \cellcolor{green!20}\colorablu{52} &  & 56 & \cellcolor{green!20}\colorablu{58} & 60 & & 64 & 66 & \cellcolor{green!20}\colorablu{68} & 72 & \cellcolor{green!20}\colorablu{74} & 76 & & 80 & \cellcolor{green!20}\colorablu{82} & 84\\
 \hline 
 \end{tabular}
     %\end{adjustbox}
     \caption{$\mathcal C$- and $\mathcal{GM}$-admissible values $<86$.}
     \label{TableAdmissible}
\end{table}

\begin{remark}[Geometric interpretation of associated K3 surfaces]
An integer $d$ of the form $2(n^2+n+1)/a^2$ for some $(a,n)\in\mathbb{Z}^2$ 
is always $\mathcal C$-admissible, but the converse is not true.
For instance, we have
      $14 = \frac{2(2^2+2+1)}{1^2}$, $26 = \frac{2(3^2+3+1)}{1^2}$, $38 = \frac{2(30^2+30+1)}{7^2}$, 
      $42 = \frac{2(4^2+4+1)}{1^2}$, $62 = \frac{2(5^2+5+1)}{1^2}$, but the next $\mathcal C$-admissible value $74$
      is not of this form.
Addington \cite{Add2016} (see also \cite{Hassett} and \cite{Ouchi2020})
proved that if $[X]$ is a cubic fourfold in $\mathcal C_d$, then 
the following are equivalent:
\begin{itemize}
       \item $d$ is of the form $2(n^2+n+1)/a^2$, for some $(a,n)\in \mathbb{Z}^2$;
       \item the Fano variety $F(X)$ of lines in $X$ 
       is birational to the Hilbert scheme $\mathrm{Hilb}^2(U)$ 
       of two points on some 
       K3 surface $U$ of degree $d$ (necessarily associated with $X$).
      \end{itemize}
Moreover,
if 
$a=1$ and $[X]\in \mathcal C_d$ is general, 
we have 
an isomorphism $F(X)\simeq \mathrm{Hilb}^2(U)$.
\end{remark}    

The notion of associated K3 surface leads 
to the following rationality conjecture (see \cite{kuz4fold,AT,kuz2,Levico,DIM}). 
\begin{conjecture}[Kuznetsov's conjecture]
A fourfold $[X]\in \mathcal C$ is rational  if and only if 
$[X]\in\mathcal C_d$ for some $\mathcal C$-admissible value $d$, that is
\[[X]\in \mathcal C_{14}\cup \mathcal C_{26}\cup \mathcal C_{38} 
\cup \mathcal C_{42} \cup \mathcal C_{62} \cup \mathcal C_{74} \cup \mathcal C_{78} \cup \mathcal C_{86}\cup\cdots\]

A fourfold $[X]\in\mathcal{GM}$ is rational 
 if and only if $[X]\in\mathcal{GM}_d$ for some $\mathcal{GM}$-admissible value $d$,
 that is,
 \[[X]\in \mathcal{GM}_{10}'\cup\mathcal{GM}_{10}^{''}\cup\mathcal{GM}_{20} 
 \cup \mathcal{GM}_{26}'\cup\mathcal{GM}_{26}^{''} \cup\mathcal{GM}_{34}^{'}\cup\cdots \]
\end{conjecture}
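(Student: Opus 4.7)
This statement is a major open conjecture, so what follows is a strategic plan indicating which tools one would bring to bear on each half, rather than a complete argument.

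\textbf{Forward direction (admissibility implies rationality).} For each admissible discriminant $d$, the plan is to exhibit an explicit birational parameterization of the general fourfold in $\mathcal{C}_d$ (resp.\ $\mathcal{GM}_d$). The geometric heuristic is that the associated K3 surface $U$ of degree $d$ and genus $d/2+1$ controls the birational geometry of $X$, and that $X$ can be reconstructed from $U$ together with auxiliary data (for cubics, typically a twisted sheaf on $U$ or a birational model of $\mathrm{Hilb}^2(U)$ mapping to the Fano variety of lines). Concretely, one looks for a distinguished special surface $S \subset X$ whose image under a suitable linear projection or first-order congruence yields a dominant rational map from a projective space to $X$. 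Such constructions have been carried out case-by-case: $d=14$ (Fano; Bolognesi--Russo--Staglian\`o), $d=26,38,42$ (Russo--Staglian\`o), and, for GM fourfolds, several small admissible values in the works cited in the introduction; the package supplies explicit implementations via \texttt{parametrize}. To extend these to all admissible $d$, one would either iterate the case analysis using the surface-search of Section~\ref{Section1}, or invoke Kuznetsov's derived-categorical formulation: an equivalence $\mathcal{A}_X \simeq D^b(U)$ should propagate rationality from $U$ to $X$.

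\textbf{Reverse direction (rationality implies admissibility).} This direction is profoundly open; no cubic fourfold and no GM fourfold has ever been proven irrational. The plan proceeds by contradiction: assume $[X]$ is rational but $[X] \notin \bigcup_{d \text{ admissible}} \mathcal{C}_d$ (respectively $\mathcal{GM}_d$), and extract a Hodge-theoretic or derived-categorical contradiction. On the Hodge side, rationality is expected to force a K3-type sub-Hodge structure in the transcendental cohomology of $X$; by Hassett's criterion together with the results of Addington, this happens exactly when $d$ is admissible. On the derived side, a rationality argument for $X$ should produce an equivalence $\mathcal{A}_X \simeq D^b(U)$ for an honest K3 surface $U$, and the existence of such a $U$ again forces admissibility.

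\textbf{Main obstacle.} The decisive difficulty lies entirely in the reverse direction: there is at present no known irrationality obstruction for any cubic or GM fourfold. Standard techniques (decomposition of the diagonal, the specialization method of Voisin and Colliot-Th\'el\`ene--Pirutka, unramified cohomology) have resisted application here, since the rational loci inside $\mathcal{C}$ and $\mathcal{GM}$ are dense along the admissible Noether--Lefschetz divisors and thus obstruct any specialization argument from a rational generic fibre, and no torsion integral Hodge class or Brauer-theoretic invariant has been identified that could separate rational from irrational fourfolds of these types. Consequently, a full resolution of the conjecture almost certainly requires a genuinely new irrationality invariant; the role of the package and of the explicit constructions described in the paper is to supply evidence for and to verify the forward direction computationally across as many admissible components as possible.
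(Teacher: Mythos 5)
This statement is Kuznetsov's conjecture, which the paper does not prove---it only records the known partial results (Theorem~\ref{solutionKuz}), namely rationality on $\mathcal C_{14}\cup \mathcal C_{26}\cup \mathcal C_{38}\cup \mathcal C_{42}\cup \mathcal{GM}_{10}'\cup\mathcal{GM}_{10}^{''}\cup\mathcal{GM}_{20}$. You correctly identify the statement as open rather than claiming a proof, and your summary of the forward-direction constructions and of the complete absence of any known irrationality obstruction for the reverse direction is consistent with the paper's discussion, so there is no proof here to compare against.
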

It is classically known that cubic fourfolds in $\mathcal{C}_{14}$
are rational (see \cite{Fano,Morin}, see also \cite{BRS}),
as well as that GM fourfolds in $\mathcal{GM}_{10}^{''}$ are rational 
(see \cite{Roth1949,Enr}, see also \cite{DIM}).
The following result summarizes the current state of the conjecture.
\begin{theorem}[``first cases'' + \cite{RS1,RS3,HoffSta}]\label{solutionKuz}
The fourfolds in $\mathcal C_{14}\cup \mathcal C_{26}\cup \mathcal C_{38} \cup \mathcal C_{42} \bigcup  
\mathcal{GM}_{10}'\cup\mathcal{GM}_{10}^{''}\cup\mathcal{GM}_{20}$ are rational.
\end{theorem}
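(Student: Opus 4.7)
The plan is to prove the theorem component by component, since each of the seven strata $\mathcal{C}_{14}, \mathcal{C}_{26}, \mathcal{C}_{38}, \mathcal{C}_{42}, \mathcal{GM}_{10}', \mathcal{GM}_{10}'', \mathcal{GM}_{20}$ requires its own explicit construction. For each component, the common strategy is to exhibit, on a general fourfold $X$ in the component, a distinguished surface $S \subset X$ (drawn from the tables of surfaces with Hodge-associated K3's) and use it either to write down a linear system of hypersurfaces through $S$ that cuts out a birational map to $\PP^4$, or to construct a \emph{congruence} of rational subvarieties of $X$ (a family $\{C_t\}_{t \in B}$, with $B$ rational, such that through a general point of $X$ passes a unique member) whose incidence map gives the rational parametrization.

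For the two classical components, I would recall the constructions already in the literature. For $\mathcal{C}_{14}$, the argument of Fano and Morin shows that a general cubic fourfold containing a smooth quartic rational normal scroll (or, equivalently, a quintic del Pezzo surface) is rational: the linear system of quadrics through the scroll, restricted to $X$, gives the birational map. For $\mathcal{GM}_{10}''$, the argument of Roth and Enriques shows that GM fourfolds of this type can be realized as blow-ups of a $\PP^4$ along a threefold, via projection from a suitable $\sigma$-plane in $\GG(1,4)$.

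For the five remaining components $\mathcal{C}_{26}, \mathcal{C}_{38}, \mathcal{C}_{42}, \mathcal{GM}_{10}', \mathcal{GM}_{20}$, I would invoke the method developed in \cite{RS1}, and extended in \cite{RS3, HoffSta}. In each case one fixes a specific surface $S$ (for instance, a smooth surface of prescribed degree, sectional genus, and adjunction type) contained in the general fourfold of the component, and exhibits a congruence of rational curves of low degree in $X$ meeting $S$ in a controlled way. The parameter space of the congruence turns out to be birational to a Grassmannian or a projective space, and the incidence map is shown to be birational. The \emph{SpecialFanoFourfolds} package plays a central role here: a single explicit computer-verified example over a finite field, combined with semicontinuity and the irreducibility of the corresponding Noether--Lefschetz component, propagates the construction to the general fourfold.

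The main obstacle in each non-classical case is the birationality check for the congruence-induced map $X \dashrightarrow B$. This splits into (i) a degree-of-the-congruence computation, which amounts to an intersection-theoretic calculation on a blow-up of $X$ along $S$ (or along a related subscheme), and (ii) verifying that the parameter space $B$ is itself rational, which is immediate in the projection cases but requires identifying $B$ with an explicit classical rational variety in the more elaborate constructions. The explicit verifications carried out by the package are what makes these otherwise delicate intersection calculations tractable in practice.
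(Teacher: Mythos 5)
Your overall decomposition is exactly the paper's: Theorem~\ref{solutionKuz} is stated there as a summary, with $\mathcal C_{14}$ attributed to Fano--Morin, $\mathcal{GM}_{10}^{''}$ to Roth--Enriques, and the remaining five components to the congruence method of \cite{RS1,RS3,HoffSta}, which is the same division of labor you propose.

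There is, however, one imprecision in your description of the congruence method that would break the argument if taken literally. You describe the congruence as a family of rational curves \emph{contained in} $X$, with a unique member through a general point of $X$, whose incidence map gives the parametrization. Such a family has a three-dimensional base, and its incidence map only exhibits $X$ as fibered in rational curves --- which does not by itself yield rationality, and is inconsistent with your later claim that the incidence map $X\dashrightarrow B$ is birational onto a four-dimensional Grassmannian or projective space. In the actual method the congruence lives in the ambient fivefold $\mathbb{V}$ (either $\PP^5$ or $\YY$): through a general point of $\mathbb{V}$ there passes a unique curve of degree $e$ that is $(re-1)$-secant to $S$, where $r$ is the degree of $X$ as a hypersurface of $\mathbb{V}$. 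The decisive (and elementary) degree count is that each such curve meets $X$ in $re$ points, of which $re-1$ lie on $S$, so the residual intersection is a single point; this is what makes the induced map from $X$ to the four-dimensional target $W$ --- the image of the map $\mu$ defined by $|H^0(\mathcal I_{S/\mathbb{V}}^e(re-1))|$, as in diagram~\eqref{diagramma} --- birational, and $W$ is rational in each case ($\PP^4$, a quadric fourfold, or a del Pezzo fourfold $\GG(1,4)\cap\PP^7$). The genuinely hard step is establishing the existence of the congruence (uniqueness of the secant curve through a general point of $\mathbb{V}$), not an intersection computation on a blow-up of $X$; with that correction your plan coincides with the construction the paper outlines.
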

   
\begin{remark}
  Although we 
  are 
  unable 
  to prove that 
  every fourfold in $\mathcal{GM}_{26}$ is rational, 
  we can exhibit several examples of ordinary rational fourfolds in $\mathcal{GM}_{26}^{''}$. 
  We include two of these examples
  in the last two rows of Table~\ref{Table K3 associated with GMs}.
  The last row contains an entirely new example. 
  The penultimate line contains 
  a new example of ordinary fourfold, but 
  examples 
  of non-ordinary fourfolds containing the same surface  
  were already constructed in 
  \cite{rationalSta}.
 See Subsection~\ref{quarta riga della tabella} below for more details 
  on the example of the last row of Table~\ref{Table K3 associated with GMs}.
  See also Table~\ref{Table K3 associated with GMs (further examples)} 
  for some example of rational fourfold in $\mathcal{GM}_{34}$.
\end{remark}

\section{Moduli spaces of K3 surfaces}\label{Section2}
 The moduli space $\mathcal F_g$ of polarized K3 surfaces of genus $g$
 parametrizes pairs $(S,H)$, where $S$ is a K3 surface and $H\in\mathrm{Pic}(S)$
 is a primitive polarization class with $H^2 = 2g-2$.
The dimension of $\mathcal F_g$ is $19$.

\subsection{Kodaira dimension of \texorpdfstring{$\mathcal F_g$}{Fg}}
There are many values of $g$ (although a finite number)
for which we have no information about the Kodaira dimension of $\mathcal F_g$.
We now recall the known results. 
The following theorem has been established by Mukai 
in the cases $g\leq 13$ and $g=16,17,18,20$, by Nuer in the cases $g=14,20$, and 
by Farkas and Verra in the cases $g=14,22$. See Tables~\ref{unirationalityFgSmall} and \ref{unirationalityFgLarge} 
for precise references; see also \cite[Section~7]{HoffSta2}.
\begin{theorem}
\label{unirationality theorem}
 The moduli space $\mathcal F_g$ is {unirational} for any $g\leq 22$,
 with the possible exceptions $g\in\{15,19,21\}$.
\end{theorem}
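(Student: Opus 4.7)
The plan is to establish unirationality case by case for each $g\in\{1,\ldots,22\}\setminus\{15,19,21\}$, producing in each case an explicit dominant rational map to $\mathcal F_g$ from a rational parameter space (typically a Grassmannian, a projective space, or an open subset thereof).

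For the ``Mukai range'' $g\leq 10$ and $g=12,13$, I would follow Mukai's classical approach: realize the general polarized K3 surface of genus $g$ either as a transverse linear section of a fixed rational homogeneous variety $\Sigma\subset\PP^N$ in its minimal embedding, or as the zero locus of a general section of a fixed homogeneous vector bundle $E$ on $\Sigma$. After quotienting by $\mathrm{Aut}(\Sigma)$, the parameter space is rational and dominates $\mathcal F_g$, giving unirationality in a uniform manner.

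For the remaining genera $g=11,14,16,17,18,20,22$, I would invoke more specialized constructions case by case: Mukai's later work for $g=16,17,18,20$; Nuer's constructions for $g=14$ and (independently) $g=20$; and the Farkas--Verra construction for $g=14$ and $g=22$. The geometric ingredients vary (Noether--Lefschetz loci of Hodge-special fourfolds, particular projective models, Brill--Noether degeneracy loci), but each one produces an explicit rational variety that dominates the relevant $\mathcal F_g$. A natural alternative, very much in the spirit of this paper, is available for $g=11,14,20,22$: one uses the associated K3 surface construction of Subsection~\ref{Subsection associated K3}, which starts from a Hodge-special cubic or GM fourfold in the appropriate component of the Noether--Lefschetz locus and, via an explicit dominant map, yields unirationality of $\mathcal F_g$; see the forthcoming Theorem~\ref{teo esplicity unir}.

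The hard step that this plan does not address is the trio of exceptional genera $g\in\{15,19,21\}$, which lie outside the reach of all currently known direct constructions; no suitable rational incidence correspondence nor Mukai-type homogeneous model is presently available for them. They are therefore built into the statement as possible exceptions rather than cases that can be settled by the methods above.
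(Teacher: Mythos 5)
Your proposal is correct and follows essentially the same route as the paper: Theorem~\ref{unirationality theorem} is a compilation of known results, justified by citation exactly as you describe (Mukai's homogeneous-model constructions for $g\leq 13$ and $g=16,17,18,20$, Nuer for $g=14,20$, Farkas--Verra for $g=14,22$), with the associated-K3-surface alternative for $g=11,14,20,22$ being precisely the content of the paper's Theorem~\ref{teo esplicity unir}. The only minor slip is attributing $g=11$ implicitly to the fourfold construction rather than to Mukai's genus-eleven paper, which is the paper's primary reference for that case; this does not affect correctness.
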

\begin{table}[htbp]
     \centering
     %\tabcolsep=1.5pt 
     %\begin{adjustbox}{width=\textwidth}
     \begin{tabular}{|l|cccc|ccccc|c|c|c|}
     \hline 
     genus & $2$ & $3$ & $4$ & $5$ & $6$ & $7$ & $8$ & $9$ & $10$ & $11$ & $12$ & $13$ \\
     \hline 
     \cellcolor{yellow!20} unirationality &  \multicolumn{4}{c}{\cellcolor{green!10} classical} &  \multicolumn{5}{|c|}{\cellcolor{green!10} \cite{Muk88}}  & \cellcolor{yellow!20} \cite{Mukg11} & \cellcolor{yellow!20} \cite{mukai-biregularclassification} & \cellcolor{yellow!20} \cite{Mukg13} \\
     \hline 
     \cellcolor{green!10} explicit unir. & \multicolumn{4}{c}{\cellcolor{green!10} } & \multicolumn{5}{|c|}{\cellcolor{green!10} } & \cellcolor{green!10} \begin{tabular}{c} \cellcolor{green!10} \cite{HoffSta2} \\ \cellcolor{green!10} Code~\ref{F11uniratinality} \end{tabular} & \cellcolor{green!10} \begin{tabular}{c} \cellcolor{green!10} \cite{KPEpi}, see  \\ \cellcolor{green!10} also \cite{HoffSta2} \end{tabular} & \\
     \hline 
      \end{tabular}
          %\end{adjustbox}
          \caption{Unirationality of $\mathcal F_g$ for $g\leq 13$}
          \label{unirationalityFgSmall}
     \end{table}
     \begin{table}[htbp]
          \centering
          %\tabcolsep=1.5pt 
          %\begin{adjustbox}{width=\textwidth}
          \begin{tabular}{|c|c|c|c|c|c|c|c|c|}
          \hline 
           $14$ & $15$ & $16$ & $17$ & $18$ & $19$ & $20$ & $21$ & $22$\\
          \hline 
           \cellcolor{yellow!20} \begin{tabular}{c} \cellcolor{yellow!20} \cite{Nuer} \\ \cellcolor{yellow!20} \cite{FV18} \end{tabular} & & \cellcolor{yellow!20} \cite{Mukg16} & 
           \cellcolor{yellow!20} \cite{Mukg17}
           & \cellcolor{yellow!20}  \cite{Mukg1820} & & \cellcolor{yellow!20} \begin{tabular}{c} \cellcolor{yellow!20} \cite{Mukg1820} \\ \cellcolor{yellow!20} \cite{Nuer} \end{tabular} & & \cellcolor{yellow!20} \cite{FarkasVerraC42} \\
           \hline 
           \cellcolor{green!10} \begin{tabular}{c} \cellcolor{green!10} \cite{RS3} \\ \cellcolor{green!10} Table~\ref{Table K3 associated with cubics} \end{tabular} & & & &  & & \cellcolor{green!10} \begin{tabular}{c} \cellcolor{green!10} \cite{RS3} \\ \cellcolor{green!10} Code~\ref{code for F20} \end{tabular} & & \cellcolor{green!10} \begin{tabular}{c} \cellcolor{green!10} \cite{RS3} \\ \cellcolor{green!10} Code~\ref{code for F22} \end{tabular}\\
          \hline 
           \end{tabular}
               %\end{adjustbox}
               \caption{Unirationality of $\mathcal F_g$ for $14\leq g\leq 22$}
               \label{unirationalityFgLarge}
\end{table}
On the other side we have the following.
\begin{theorem}[\cite{Gritsenko2007}] \hspace{1pt}
     \label{non unirazionalita di Fg}
\begin{itemize}
 \item The moduli space $\mathcal F_g$ 
 is of {general type} for any $g\geq 47$,
 with the possible exceptions $g\in\{48,49,50,52,53,54,56,57,60,62\}$.
 \item The Kodaira dimension $\kappa(\mathcal F_g)$ is {non-negative} for $g\geq 41$, 
 with the possible exceptions $g\in\{42,45,46,48\}$.
\end{itemize}
\end{theorem}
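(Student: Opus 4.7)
The plan is to follow the Gritsenko--Hulek--Sankaran modular forms method. First I would identify $\mathcal F_g$ birationally with the arithmetic quotient $\widetilde O^+(L_g)\backslash \mathcal D_{L_g}$, where $L_g = 2U\oplus 2E_8(-1)\oplus\langle -2(g-1)\rangle$ is the relevant K3 lattice and $\mathcal D_{L_g}$ is the associated $19$-dimensional Type IV Hermitian symmetric domain. Under this identification, pluricanonical sections on a smooth toroidal compactification $\overline{\mathcal F}_g$ correspond to $\widetilde O^+(L_g)$-modular forms of weight $19k$ that vanish to suitable order along the boundary and along the branch divisor $\mathcal R$ of the quotient map (the latter coming from reflections in the orthogonal group). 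Koecher's principle, applicable because $\dim\mathcal D_{L_g}=19\geq 2$, handles the boundary automatically, so one can reduce to an interior statement about spaces of modular forms.

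The core of the argument is the low-weight cusp form trick: produce a single cusp form $F$ of weight $a<19$ for $\widetilde O^+(L_g)$ that vanishes along the ramification divisor $\mathcal R$. Granting such an $F$, multiplication by $F^k$ converts any modular form of weight $19k-ak$ into a genuine pluricanonical section on $\overline{\mathcal F}_g$. A Hirzebruch--Mumford proportionality computation then gives the asymptotics $\dim M_{mk}(\widetilde O^+(L_g))\sim C\,k^{19}$ for fixed large $m$; when $a$ is small enough, the supply of such forms fills out a full-dimensional subspace of pluricanonical sections and forces Kodaira dimension $19$, giving general type. For the weaker conclusion $\kappa(\mathcal F_g)\geq 0$ one only needs the existence of a single non-zero pluricanonical form, which follows from exhibiting any cusp form of weight $\leq 19$ that vanishes along $\mathcal R$.

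The delicate step, and the source of the two excluded lists, is the construction of $F$. I would build it as a Borcherds multiplicative lift: start from a nearly holomorphic vector-valued modular form of weight $(2-19)/2$ for the Weil representation of the discriminant form of $L_g$ (equivalently a weak Jacobi form of appropriate weight and index $g-1$), and apply Borcherds' lift to obtain a product-expansion modular form on $\mathcal D_{L_g}$ whose divisor is a prescribed combination of Heegner divisors. The vanishing order along $\mathcal R$ and the weight are then read off from the principal part of the input form, and both depend arithmetically on $g-1$ modulo small integers.

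The main obstacle is precisely this arithmetic constraint: for most $g$ one can write down an input form whose lift has weight strictly less than $19$ and vanishes along all reflective divisors, but for genera where the discriminant form of $L_g$ is small or atypical (such as $g\in\{42,45,46,48\}$ for the weaker statement and $g\in\{48,49,50,52,53,54,56,57,60,62\}$ for general type) the naive Borcherds input either does not exist or produces a form of weight exactly $19$ without the required vanishing along $\mathcal R$. Ruling out these residual cases would require either finer obstruction-free Jacobi form constructions or an entirely different pluricanonical section, and it is exactly this which has not yet been resolved, so that the exceptional lists remain in the statement.
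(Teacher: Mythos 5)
This theorem is imported verbatim from \cite{Gritsenko2007}; the paper gives no proof of its own, so there is nothing internal to compare against. Your outline is an accurate summary of the Gritsenko--Hulek--Sankaran argument: the identification of $\mathcal F_g$ with a $19$-dimensional orthogonal modular variety, Koecher's principle at the boundary, the low-weight cusp form trick combined with Hirzebruch--Mumford proportionality, and the arithmetic origin of the exceptional lists. The only refinement worth noting is that the low-weight cusp form is not built from scratch as a Borcherds lift for the lattice $L_g$ itself, but is obtained as the quasi-pullback of the weight-$12$ Borcherds form $\Phi_{12}$ on the domain of $II_{2,26}$; the weight of the quasi-pullback is $12+N$, where $N$ counts roots of $E_8$ orthogonal to a suitable vector of square $2g-2$, and the exceptional genera are exactly those for which no vector with $2\leq N\leq 6$ (and the accompanying cusp condition) could be found.
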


\subsection{Connections with cubic fourfolds and GM fourfolds}\label{Subsection Connections with cubic and GM fourfolds}
Let $d>6$ be $\mathcal C$-admissible and $[X]\in \mathcal C_d$ general. If
$d\equiv 2\,\mathrm{mod}\,6$ then $X$ admits a unique associated K3 surface,
while if $d\equiv 0\,\mathrm{mod}\,6$ then $X$ admits two associated K3 surfaces. Indeed, we have the following:
\begin{theorem}[\cite{Has00}]\label{HassettFgCd}
Assume $d>6$ is $\mathcal C$-admissible.
There is a dominant rational map 
\[\varphi_d : \mathcal F_{\frac{d}{2}+1}\dashrightarrow \mathcal C_d , \quad [S]\mapsto [X]: S \mbox{ is associated with }X\]
which is birational for $d\equiv 2\,\mathrm{mod}\,6$
and a degree $2$ cover for $d\equiv 0\,\mathrm{mod}\,6$.
\end{theorem}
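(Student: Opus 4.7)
The plan is to work through the period map picture and global Torelli theorems on both sides. On the cubic fourfold side, for $[X]\in\mathcal{C}_d$ one has the primitive middle cohomology $H^4_{\mathrm{prim}}(X,\ZZ) = \langle h^2\rangle^\perp$, an even lattice of signature $(20,2)$; the Hodge-special condition provides a rank-two sublattice $K_d\subset H^{2,2}(X)\cap H^4(X,\ZZ)$ of discriminant $d$, and the interesting object is $K_d^\perp\subset H^4(X,\ZZ)$ carrying the induced weight-four Hodge structure of K3 type. On the K3 side, for $[S]\in\mathcal{F}_{d/2+1}$ the primitive cohomology $H^2_{\mathrm{prim}}(S,\ZZ) = \langle H\rangle^\perp$ is an even lattice of signature $(19,2)$ of discriminant $d$. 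The first step is to recall Hassett's lattice criterion: $d$ is $\mathcal{C}$-admissible precisely when there exists a Hodge isometry $K_d^\perp \simeq H^2_{\mathrm{prim}}(S,\ZZ)(-1)$, which by Nikulin's theory on primitive embeddings into the unique (up to isometry) K3 lattice $\Lambda_{K3}$ translates into the numerical conditions tabulated for admissibility.

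Next I would construct $\varphi_d$ going in the direction $[S]\mapsto [X]$ and check well-definedness. Given a K3 surface $S$ of genus $g=d/2+1$, transport its polarized Hodge structure through a chosen isometry to a weight-four Hodge structure on the abstract lattice $K_d^\perp$; glue it with the trivial $(2,2)$-piece on $K_d$ to obtain a candidate period point in the period domain of cubic fourfolds. By Laza--Looijenga surjectivity and Voisin's global Torelli theorem for cubic fourfolds, this period is realized by a unique $[X]\in\mathcal{C}_d$, independent of the choice of marking up to the monodromy action. This defines $\varphi_d$ and shows it is dominant onto $\mathcal{C}_d$ (since every $[X]\in\mathcal{C}_d$ admits a Hodge-theoretically associated K3 by admissibility).

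For the fiber-degree statement I would invoke the K3 global Torelli theorem: the fiber $\varphi_d^{-1}([X])$ is in bijection with Hodge isometries $H^2_{\mathrm{prim}}(S,\ZZ)(-1)\to K_d^\perp$ modulo $\mathrm{Aut}(S)$ and the monodromy of the universal cubic fourfold restricted to $\mathcal{C}_d$. Counting such isometries reduces to counting orbits, under the stable orthogonal group $\widetilde{O}(\Lambda_{K3})$, of primitive embeddings of the rank-one lattice $\langle 2g-2\rangle$ into $\Lambda_{K3}$ whose orthogonal complement realizes $K_d^\perp$. A direct Nikulin-style computation with discriminant forms then shows: when $d\equiv 2\bmod 6$ the class $h^2\in H^4(X,\ZZ)$ has divisibility $3$ in the cubic lattice, which rigidifies the embedding and produces a single orbit, so $\varphi_d$ is birational; when $d\equiv 0\bmod 6$ the divisibility condition fails and there are exactly two orbits corresponding to an involution on the discriminant group, giving the degree-two covering.

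The main obstacle is precisely this final lattice-theoretic step: controlling the orbits of primitive embeddings via the discriminant form and identifying the involution in the $d\equiv 0\bmod 6$ case with the geometric choice of associated K3 surface. This requires a careful case analysis of how $h^2$ sits inside $H^4(X,\ZZ)$ modulo $3$, together with Nikulin's uniqueness results for even lattices of signature $(r,s)$ with $r+s\geq 3$ and controlled discriminant; once this is in hand, the rest of the argument is a formal consequence of global Torelli on both sides and surjectivity of the cubic fourfold period map.
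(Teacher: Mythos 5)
This theorem is quoted from Hassett's paper \cite{Has00} and is not proved in the text you were given, so there is no in-paper argument to compare against; your sketch is, in outline, a faithful reconstruction of Hassett's actual proof (period domains for labelled special cubic fourfolds, Voisin's Torelli theorem, the Torelli theorem for K3 surfaces, and a Nikulin-style count of markings modulo monodromy), and the architecture and the final answer you state are correct. Two caveats. First, the mechanism you propose for the $d\bmod 6$ dichotomy is misstated: the divisibility of $h^2$ in $H^4(X,\ZZ)$ does not depend on $d$ (it is the same for every smooth cubic fourfold). What actually changes with $d\bmod 6$ is the isomorphism class of the rank-two labelling lattice $K_d\ni h^2$ --- diagonal $\langle 3\rangle\oplus\langle d/3\rangle$ when $d\equiv 0$, non-diagonal when $d\equiv 2$ --- and hence the discriminant form of $K_d^\perp$; the degree of $\varphi_d$ is then the number of orbits of markings $K_d^\perp\simeq H^2_{\mathrm{prim}}(S,\ZZ)(-1)$ under the two monodromy/isometry groups, equivalently whether $-\mathrm{id}$ on the discriminant group is realized by an allowed isometry on the cubic side. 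Since you explicitly defer this computation as the ``main obstacle,'' the decisive step of the proof is still open in your write-up, and the heuristic you offer for it would not survive the computation as phrased. Second, invoking Laza--Looijenga surjectivity is anachronistic relative to \cite{Has00} and slightly more than is needed: for a dominant \emph{rational} map one only needs that the locus of periods not realized by smooth cubics pulls back to a proper closed subset of $\mathcal F_{d/2+1}$, which follows from non-emptiness of $\mathcal C_d$ together with the local isomorphism of period domains.
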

The map $\varphi_{d}$ \emph{is not explicit} since it is defined at the level of Hodge structures. 
{The author is not able to determine an equation of $[X] = \varphi_d([S])$ from equations of $[S]$.}
\begin{remark}[\emph{Non-explicit} unirationality of $\mathcal F_{14}$ and $\mathcal F_{20}$]
     \label{rmk unirationality F14}
Nuer in \cite{Nuer} showed that $\mathcal C_{26}$ and $\mathcal C_{38}$
are unirational.  From this and the birationality of the maps $\varphi_{26}$ and $\varphi_{38}$,
he deduced the unirationality of $\mathcal F_{14}$ and $\mathcal F_{20}$.
\end{remark}
\begin{remark}[\emph{Non-explicit} unirationality of $\mathcal F_{14}$ and $\mathcal F_{22}$]
     \label{rmk unirationality F14 and F22}
We recall the results of Farkas and Verra on the unirationality of 
$\mathcal F_{14}$ and $\mathcal F_{22}$.
Let us consider 
\begin{align*}
 \mathfrak h_{\mathrm{scr}}^{3,7} &= \mathrm{PGL}(6)\mbox{-quotient of the Hilbert scheme of }3\mbox{-nodal septic scrolls} ,\\
 \mathfrak h_{\mathrm{scr}}^{8,9} &= \mathrm{PGL}(6)\mbox{-quotient of the Hilbert scheme of }8\mbox{-nodal nonic scrolls} ,
\end{align*}
and the incidence correspondences
\begin{align*}
\mathfrak X_{26} &= \left\{[R,X]: [R]\in\mathfrak h_{\mathrm{scr}}^{3,7},\ [X]\in |H^0(\mathcal I_{R / \PP^5}(3))|   \right\}/\mathrm{PGL}(6) ,\\
 \mathfrak X_{42} &= \left\{[R,X]: [R]\in\mathfrak h_{\mathrm{scr}}^{8,9},\ [X]\in |H^0(\mathcal I_{R / \PP^5}(3))|   \right\}/\mathrm{PGL}(6) .
\end{align*}
The main results in \cite{FV18} and \cite{FarkasVerraC42} state 
 that $\mathfrak X_{26}$ is {rational} and that $\mathfrak X_{42}$ is {unirational}, and moreover that
 we have two commutative diagrams:
  \begin{equation*}
\xymatrix{
 \mathcal F_{14,1} \ar@{-->}[rr]^{\simeq}_{\begin{subarray}{c} \mbox{\tiny{\quad \emph{non-explicit map}}} \end{subarray}} \ar[d] && \mathfrak X_{26} \ar[d] \\
 \mathcal F_{14} \ar@{-->}[rr]^{\varphi_{26}}_{\begin{subarray}{c} \mbox{\tiny{\quad \emph{non-explicit map}}} \end{subarray}} && \mathcal C_{26}
 }\quad \quad \quad 
 \xymatrix{
 \mathcal F_{22,1} \ar@{-->}[rr]^{\simeq}_{\begin{subarray}{c} \mbox{\tiny{\quad \emph{non-explicit map}}} \end{subarray}} \ar[d] && \mathfrak X_{42} \ar[d] \\
 \mathcal F_{22} \ar@{-->}[rr]^{\varphi_{42}}_{\begin{subarray}{c} \mbox{\tiny{\quad \emph{non-explicit map}}} \end{subarray}} && \mathcal C_{42}
 }
 \end{equation*}
From this the unirationality of 
$\mathcal F_{14}$ and $\mathcal F_{22}$ follows.
\end{remark}
The following remark 
is particularly useful for us. It
follows from 
Theorem~\ref{HassettFgCd} in the case of cubic fourfolds,
and
from the main result of \cite{BrakkePertusi} in the case of GM fourfolds.
\begin{remark}[\cite{Has00,BrakkePertusi}]\label{rmk BrakkePertusi}
Let $d$ be a $\mathcal{C}$-admissible (resp., $\mathcal{GM}$-admissible) value. 
Let $X$ be a cubic (resp., GM) fourfold corresponding 
to a general point in an irreducible component of the Noether--Lefschetz locus of 
cubic (resp., GM) fourfolds of discriminant $d$.
Let $S$ be a K3 surface of genus $g=\frac{d}{2}+1$ which is associated with the fourfold $X$.
Then $S$ corresponds to a general point in the moduli space $\mathcal{F}_{g}$.
\end{remark}

\subsection{Explicit unirationality of \texorpdfstring{$\mathcal F_g$}{Fg}}
We are interested in the \emph{explicit unirationality} of the moduli space $\mathcal{F}_g$,
that is, in finding
 an explicit dominant rational map from a  projective space to $\mathcal{F}_g$. More precisely, with ``\emph{explicit}'' we mean that there is 
 a computer-implementable procedure
 to 
determine the equations of the general member of $\mathcal{F}_g$
as a function of a 
number of specific independent variables.
The original methods used in the proof of Theorem~\ref{unirationality theorem}
provide
the explicit unirationality of $\mathcal F_g$ only
for $g\leq 12$ and $g\neq 11$ (see also \cite{KPEpi} for the case $g=12$).

In the following, we recall a construction from \cite{RS3} and \cite{HoffSta2}
on the explicit unirationality of $\F_g$ for some values of $g$. Then we show
how this can be executed in practice to get the equations 
of general K3 surfaces.
To be more precise we have the following (see also Tables~\ref{unirationalityFgSmall} and \ref{unirationalityFgLarge}).
\begin{theorem}[\cite{RS3,HoffSta2}]\label{teo esplicity unir}
 The moduli space $\mathcal F_g$ is explicitly unirational for $g=6,8,11,14,20,22$.
\end{theorem}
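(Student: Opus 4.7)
The plan is to treat the six values of $g$ in three groups, following a common philosophy: for each $g$ produce a deterministic algorithm that, given a point of some projective parameter space, returns explicit defining equations of a K3 surface representing a general point of $\mathcal F_g$.

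For $g=6,8$, Mukai's classical realizations express a general $[S]\in\mathcal F_g$ as the transverse intersection of a few hypersurfaces inside a specific homogeneous projective variety (a linear section of a Grassmannian or of a spinor variety) in its natural embedding. The coefficients of the cutting hypersurfaces themselves serve as parameters, so a random choice produces a general K3 of genus $g$ together with its equations; the unirationality is thus tautologically explicit. For $g=11$, one follows \cite{HoffSta2}: starting from Mukai's biregular model of a general genus-$11$ K3 surface, one constructs a birational parameterization by a projective space of the ambient Mukai variety, and then intersects with a generic linear subspace of the appropriate codimension to obtain equations.

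For $g=14,20,22$, the key observation is that the values $d=2g-2\in\{26,38,42\}$ are all $\mathcal C$-admissible. By Remark~\ref{rmk BrakkePertusi}, K3 surfaces associated with cubic fourfolds representing general points of a component of $\mathcal C_d$ are themselves general points of $\mathcal F_g$. The proof proceeds in two steps. First, produce an explicit unirational parameterization of a component of $\mathcal C_d$: fix a rational surface $S\subset\PP^5$ of an appropriate numerical type (following the constructions used in Theorem~\ref{Thm unirazionality of Cd}) such that a general cubic fourfold through $S$ is smooth and Hodge-special of discriminant $d$, and verify that the resulting locus is open inside an irreducible component of $\mathcal C_d$ using Proposition~\ref{Prop parameterCount}. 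Second, reconstruct the associated K3 surface from the cubic fourfold by the geometric procedure of \cite{RS1,RS3}: one identifies a congruence of rational curves of low degree sweeping out a hypersurface in the fourfold, whose parameter space is birational to the sought K3 surface. Since the Hodge-theoretic map $\varphi_d$ of Theorem~\ref{HassettFgCd} is not explicit, it is this geometric reconstruction via congruences that replaces it; equations of the K3 are then recovered by intersection-theoretic computations, projections, and inversion of birational maps.

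The hard part, in each of the cases $g=14,20,22$, is the identification and verification of the correct congruence: one must exhibit a family of rational curves on the cubic fourfold whose parameter space is birational to a K3 surface of degree $d$ and genus $g$, and this verification rests on nontrivial computations of classes in the Chow ring of the relevant Fano variety of curves on the fourfold. Once a suitable congruence is pinned down, the remaining operations are routine and automated by the functions of \emph{SpecialFanoFourfolds}, supported by \emph{Cremona} and \emph{RationalMaps}. The existence of such a congruence and the birationality of its parameter space to a K3 surface of the required type is established in \cite{RS3}, so invoking these results completes the proof.
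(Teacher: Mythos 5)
Your treatment of $g=14,20,22$ follows the same route as the paper (and \cite{RS3}): an explicit unirational family of surfaces $S\subset\PP^5$ whose generic cubic through $S$ lands in $\mathcal C_{26}$, $\mathcal C_{38}$, $\mathcal C_{42}$, verified by Proposition~\ref{Prop parameterCount}, plus Remark~\ref{rmk BrakkePertusi} to guarantee generality of the resulting K3 in $\mathcal F_g$. But you misdescribe the mechanism by which the K3 is extracted. The congruence is a family of $(3e-1)$-secant curves of degree $e$ to $S$ in $\PP^5$ (not a family of curves on $X$ sweeping out a hypersurface), and its parameter space $\mathcal H$ is \emph{four}-dimensional, birational to the prime Fano fourfold $W$ that is the image of the map $\mu$ defined by $|H^0(\mathcal I_{S/\PP^5}^e(3e-1))|$; it cannot be birational to the K3 surface. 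The associated K3 instead appears as the minimal model of the surface $U\subset W$ along which the inverse of the birational map $\mu|_X:X\dashrightarrow W$ has its base locus, recovered together with its exceptional curves via $U\cap U'$ for a second fourfold $X'\supset S$ (diagrams \eqref{diagramma} and \eqref{diagramma2}). Likewise, the verification of the congruence is not a Chow-ring computation on a Fano variety of curves on $X$: it is done by checking that the map $\varphi$ defined by the cubics through $S$ is birational onto its image $Z$ and counting lines on $Z$ through a general point.

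The genuine gap is $g=11$. There is no Mukai projective model for genus-$11$ K3 surfaces (prime Fano threefolds of genus $11$ do not exist), and Mukai's proof \cite{Mukg11} of the unirationality of $\mathcal F_{11}$, via curves of genus $11$ and moduli of vector bundles, is precisely one of the non-explicit arguments this theorem is meant to improve on. The explicit construction of \cite{HoffSta2} used here goes instead through Gushel--Mukai fourfolds: one takes $[X]\in\mathcal{GM}_{20}$ containing a rational surface $S\subset\YY$ of degree $9$ and sectional genus $2$ with class $6\sigma_{3,1}+3\sigma_{2,2}$ (the family of \cite{HoffSta}), uses the congruence of $3$-secant conics to $S$ in $\YY$ and the map $\mu:\YY\dashrightarrow\PP^4$ given by cubics with double points along $S$, and obtains the genus-$11$ K3 as the minimal model of the resulting surface $U\subset\PP^4$ (Code~Example~\ref{F11uniratinality}). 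So $g=11$ is handled by exactly the same congruence machinery as $g=14,20,22$, with $\mathbb{V}=\YY$ and $r=2$, not by a separate Mukai-model argument. Your $g=6,8$ cases are fine and agree with the paper's remark that these are elementary complete intersections in Grassmannians (the spinor variety you mention is the genus-$7$ model and is not needed here).
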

Note that  cases $g=6$ and $g=8$ are very elementary: general K3 surfaces
of genus $6$ and $8$ can be respectively realized 
as linear sections of quadratic sections of $\mathbb{G}(1,4)\subset\PP^9$
and as linear sections of $\mathbb{G}(1,5)\subset\PP^{14}$.
However 
the new method works in the same way in all cases.
 The main ingredient used 
 is 
 a \emph{good description}
 of a 
 {unirational component} 
 of the 
 {Noether--Lefschetz locus} 
 in the moduli space of cubic fourfolds or GM fourfolds
 such that the fourfolds in this component 
 have a 
 Hodge-associated K3 surface of genus $g$. 
 So,
 taking into account Table~\ref{TableAdmissible} and Theorems~\ref{non unirazionalita di Cd} and \ref{non unirazionalita di Fg},
 we do not exclude that the same method 
 might be also applied to other values of $g$ in the set
 \[\{6, 8, 11, 14, \colorablu{18}, 20, 22, \colorablu{26}, \colorablu{27}, \colorablu{30}, \colorablu{32}, \colorablu{35}, \colorablu{38}, \colorablu{40}\} . \]
In the next subsection 
we
  provide more details on the procedure,
which is strongly connected with the proof of Theorem~\ref{solutionKuz}.

\subsection{An overview of the construction to obtain Theorem~\ref{teo esplicity unir}}
For more details on this subsection, we refer to \cite[Section~4]{RS3} and \cite[Section~6]{HoffSta2}.
 Let $\mathbb{V}$ be either $\PP^5$ or a Del Pezzo fivefold 
 $\YY$, and 
let $r$ be respectively equal to  $3$ and $2$, 
the degree of the hypersurfaces we consider in $\mathbb{V}$.
For each of the components $\mathcal{D}$
considered in 
Theorem~\ref{solutionKuz} parameterizing fourfolds 
of discriminant $d$ (with the exception of $\mathcal{GM}_{10}^{''}$),
we are able to exhibit an explicit unirational family 
$\mathcal S\subset \mathrm{Hilb}(\mathbb{V})$  of surfaces in $\mathbb{V}$
such that the following hold (actually, we have more than one example; 
see Tables~\ref{Table K3 associated with cubics} and \ref{Table K3 associated with GMs}):
\begin{enumerate}
\item The closure of the locus of smooth hypersurfaces $X\subset\mathbb{V}$ of degree $r$ containing some surface 
$S$ of the family $\mathcal{S}$ describes a hypersurface 
in $\PP(H^0(\mathcal{O}_{\mathbb{V}}(r)))$,
which gives rise to the component $\mathcal{D}$ of 
the Noether--Lefschetz locus in the corresponding moduli space $\mathcal C$ or $\mathcal{GM}$.
\item For some $e\geq1$, the general member $[S]\in\mathcal S$ 
admits a \emph{congruence of 
$(re-1)$-secant curves of degree $e$},
parametrized by a variety $\mathcal H$.
Thus we have a universal family 
\[\mathcal{X} = \{([C],p):[C]\in\mathcal H,\ p\in C\subset\mathbb{V}\}\subset \mathcal H\times \mathbb{V}\]
and 
two projections
\begin{equation}\label{proiezioni}
\xymatrix{
& \mathcal{X}\ar[dr]^{\pi}\ar[dl]&\\
\mathcal{H} & & \mathbb{V},}
\end{equation}
where $\pi$ is birational and its general fiber 
$\pi^{-1}(p)$ corresponds to the unique curve $[C]\in\mathcal H$ 
passing through $p$ and cutting $S$ at $re-1$ points.
\item 
 The linear system 
$|H^0(\mathcal I_{S/\mathbb{V}}^e(re-1)|$ of hypersurfaces in $\mathbb{V}$
of degree $re-1$ with points of multiplicity $e$ along $S$
defines a dominant rational map 
\[\mu:\mathbb{V}\dashrightarrow W \] 
onto a prime Fano fourfold $W$ of index $s=i(W)$. 
In particular, {we can realize the general  curve of the congruence 
as the general fiber 
of the map $\mu$}.
\item 
By degree reasons,
the
restriction of $\mu$ to 
a general hypersurface
$[X]\in|H^0(\mathcal I_{S/\mathbb{V}}(r))|$ of degree $r$ that contains $S$ 
gives a birational map 
$\mu|_X:X\dashrightarrow W $.
We have that the inverse map of $\mu|_X$ is defined by the linear system 
of hypersurfaces in $W$ of degree $se-1$  having points 
of multiplicity $e$ along an irreducible surface $U\subset W$.
These maps fit in the following commutative diagram:
\begin{equation}
\label{diagramma}
\xymatrix{&& Z\cap \mathbb{P}^{N-1} & \\ \\
& {\mathbb{V}} \ar@/^1.9pc/@{-->}[rrd]^{\mu} &&&  \\
S 
\ar@{^{(}->}[r] 
& X\ar@{^{(}->}[u]\ar@/^1.0pc/@{-->}[rr]^{|H^0(\mathcal{I}^{e}_{S}(re-1))|} 
\ar@/^3.0pc/@{-->}[uuur]_{|H^0(\mathcal{I}_{S}(r))|}^{\varphi|_X}
&& W \ar@/^1.0pc/@{-->}[ll]^{|H^0(\mathcal{I}^{e}_{U}(s e-1))|}
\ar@/_3.0pc/@{-->}[uuul]^{|H^0(\mathcal{I}_{U}(s))|} & U \ar@{_{(}->}[l]
} 
\end{equation}
where $Z\subset\PP^{N}$
is the closure of the image of the map 
$\varphi:\mathbb{V}\dashrightarrow\PP^{N}$
defined by the linear system of hypersurfaces of degree $r$ through $S$.
\item Finally, the surface $U\subset W$ can be obtained 
as a linear projection of a 
 minimal K3 surface $\tilde{U}\subset\PP^{\frac{d}{2}+1}$
of degree $d$ and genus $\frac{d}{2}+1$, 
which is associated with the fourfold $X$.
\end{enumerate}
In this framework, we define
the \emph{fundamental locus} $E\subset\mathbb{V}$
 of the congruence as the base locus 
 of the inverse of the projection $\pi$ in \eqref{proiezioni}, that is, 
$E={\{p\in\mathbb{V} : \dim(\pi^{-1}(p))>0\}}$.
Then notice that  
there are two kinds of points on $U$:
\begin{itemize}
 \item the points $p\in W$ such that $\overline{\mu^{-1}(p)}$ is a curve of the congruence 
 but contained in the hypersurface $X\subset\mathbb{V}$;
 \item and the points $p\in\overline{\mu(E)}\subset W$
 in the image via $\mu$ of $E$; 
 so that  $\dim \overline{\mu^{-1}(p)} \geq 2$.
\end{itemize}
A key remark here is that
 the points of the second kind do not depend on the hypersurface $X\subset\mathbb{V}$ through $S$.
 So that if $X'\subset\mathbb{V}$ is another general hypersurface of degree $r$ through $S$,
 and $U'\subset W$ is the 
 corresponding surface defining the inverse of the restriction of $\mu$ to $X'$, we must have
 $ \overline{\mu(E)}\subseteq U\cap U' $.
 Actually, in all cases we have analyzed, it turns out that 
 $\overline{\mu(E)}$ is the union 
 of the exceptional curves on $U$ and on $U'$, which are recoverable as the
 top-dimensional components of $U\cap U'$.
 In particular, we 
 can determine their equations   from the equations of $S$ in $\mathbb{V}$.
 Once we have computed the surface $U$ and its exceptional curves $C_1,\ldots,C_t$, we get the minimal associated K3 surface $\tilde{U}\subset\mathbb{P}^{\frac{d}{2}+1}$ as the closure of the image of the rational map $U\dashrightarrow \mathbb{P}^{\frac{d}{2}+1}$ defined by 
 $|H+\sum_{i=1}^{t}\deg(C_i) C_i|$ (in some cases a normalization of $U$ is also needed).
 We summarize with the following diagram:
\begin{equation}
\label{diagramma2}
\xymatrix{
& X' \ar@{_{(}->}[dr] & & & & U' \ar@{_{(}->}[ld] \ar@{-->}[rr]^{|H+\sum_{i=1}^{t}\deg(C_i) C_i|} & & \tilde{U'}\\
S \ar@{_{(}->}[ur] \ar@{^{(}->}[rd]  &&  \mathbb{V} \ar@{-->}[rr]^{\mu} && W \ar@/_1.0pc/@{-->}[lllu]_{|H^0(\mathcal{I}^{e}_{U'}(s e-1))|} \ar@/^1.0pc/@{-->}[llld]^{|H^0(\mathcal{I}^{e}_{U}(s e-1))|} & C_1,\ldots,C_t \ar@{_{(}->}[l]\\
& X \ar@{^{(}->}[ur] & & & & U \ar@{^{(}->}[lu] \ar@{-->}[rr]_{|H+\sum_{i=1}^{t}\deg(C_i) C_i|} & & \tilde{U}
}
\end{equation}
 By Remark~\ref{rmk BrakkePertusi} we deduce that 
 if the pair $(S,X)$ 
 is general enough in 
  the incidence correspondent 
 ${\mathcal X}_{\mathcal S} = \left\{(S,X): [S]\in \mathcal S,\ [X]\in |H^0(\mathcal I_{S/\mathbb{V}}(r))|   \right\}  $,
 then 
 the K3 surface $\tilde{U}$
 will be general enough 
 in the moduli space $\mathcal F_{g}$ with $g=\frac{d}{2}+1$. 
 If the coordinates of the generic point of ${\mathcal X}_{\mathcal S}$ 
 belong to 
 a pure transcendental extension of $\mathbb C$ (which 
 is always possible if $\mathcal S$ is unirational),
 then the same holds for the 
 coordinates of the generic point of $\mathcal F_g$, thus obtaining the unirationality of $\mathcal F_g$.

\subsection{Implementation of the construction}
Everything said in the previous subsection
is automatically detected and handled by the function 
\begin{tcolorbox}[breakable=true,boxrule=0.5pt,opacityback=0.1,enhanced jigsaw]  
\begin{center}
     \href{https://faculty.math.illinois.edu/Macaulay2/doc/Macaulay2/share/doc/Macaulay2/SpecialFanoFourfolds/html/_associated__K3surface.html}{\texttt{associatedK3surface}} 
 \end{center}
\end{tcolorbox}
\noindent 
This function 
takes as input a Hodge-special fourfold 
represented by a pair $(S,X)$, with $S\subset X\subset \mathbb{V}$ as described above.
Then, if nothing goes wrong, it returns four objects:\footnote{This function is under development and subject to change. 
In future versions the output could be encapsulated in a single object.}
\begin{itemize}
\item the dominant rational map $\mu:\mathbb{V} \dashrightarrow W$ defined by the linear system of hypersurfaces of degree $re-1$ having points of multiplicity $e$ along $S$;    
\item the surface $U\subset W$ determining the inverse map of the restriction of $\mu$ to $X$;              
\item the list $C=\{C_1,C_2,\ldots\}$ of the exceptional curves on the surface $U$ (more precisely, $C_1$ is the union of the exceptional lines, 
$C_2$ of the conics, and so on);         
\item the birational map $f:U\dashrightarrow \tilde{U}\subset \mathbb{P}^{\frac{d}{2}+1}$ defined by 
$|H+\sum_{i=1}^{t}\deg(C_i) C_i|$.     
\end{itemize}
Therefore, by taking the image of the last map  we get an 
associated K3 surface $\tilde{U}\subset\mathbb{P}^{\frac{d}{2}+1}$ with the fourfold $X$. 

Here are a few more details on the 
computation behind the scenes.
The first task of the procedure is to try to detect the congruence of curves. 
This is done by considering the map $\varphi:\mathbb{V}\dashrightarrow Z\subset\PP^{N}$ 
that appears in \eqref{diagramma}
defined by the linear system 
of hypersurfaces of degree $r$ through $S$, checking that it is birational, and then
 analyzing the lines on $Z$ passing through its general point. Indeed, if $\varphi$ is birational,
 it  induces a 1--1 correspondence:
 \[
     \bigcup_{e\geq 1} \left\{{\begin{minipage}[c]{0.4\textwidth} \normalsize{curves of degree $e$ in $\mathbb{V}$ passing though a general  point $p\in\mathbb{V}$ and that  are  $(re-1)$-secant to $S$}\end{minipage}} \right\}   
     \stackrel{\simeq}{\longrightarrow} 
     \left\{{\begin{minipage}[c]{0.3\textwidth} \normalsize{lines contained in $Z\subset\PP^N$ 
         and  passing through $ \varphi(p) $}\end{minipage}} \right\} .
 \]
Once the congruence is detected, the next tasks are to compute 
the map $\mu:\mathbb{V}\dashrightarrow W$ and the surface $U\subset W$ corresponding to $X$.
There are several ways to determine the equations of $U$.
The strategy often adopted by \emph{Macaulay2}
 is based on the fact 
that 
we have a 1--1 correspondence between the two linear systems
$|H^0(\mathcal I_{S/X}(r))|$ and $|H^0(\mathcal I_{U/W}(s))|$,
which follows from the commutativity of the diagram~\eqref{diagramma}.
In most of the known cases, $S$ is cut out by hypersurfaces 
of degree $r$ and $U$ is cut out by hypersurfaces of degree $s$.
\subsection{Running the construction with a example}\label{quarta riga della tabella}
In the following,
we run the procedure with
an example of fourfold constructed as described 
in the last row of Table~\ref{Table K3 associated with GMs}.
This is a very special GM fourfold $X\subset \YY= \GG(1,4)\cap \PP^8$ which contains 
a smooth rational surface $S\subset\PP^8$ of degree $9$ and sectional genus $2$ 
with class $5\sigma_{3,1}+4\sigma_{2,2}$ in $\GG(1,4)$. Thus $[X]\in \mathcal{GM}_{26}^{''}$, by \eqref{autointersezioneGM} and \eqref{discriminanteGM}, although not all fourfolds in $\mathcal{GM}_{26}^{''}$ are of this type (see Code~Example~\ref{codimension 2 countParametri}).
It seems relevant to notice that 
this surface $S$, as a subvariety of $\PP^8$, is the same surface that appears in 
the fourth row of Table~\ref{Table K3 associated with GMs} and in 
Code~Example~\ref{F11uniratinality}, that is the surface considered in 
\cite{HoffSta} to describe the component $\mathcal{GM}_{20}$
(indeed, both can be realized as the image of the plane 
via the linear system of quartic curves through three simple
points and one double point in general position).
This surface also admits a congruence of $3$-secant conics
inside $\YY$, and the linear system of cubic hypersurfaces 
in $\YY$ with double points along $S$ gives a dominant rational map 
$\mu:\YY\dashrightarrow W\subset\PP^5$ 
onto a smooth quadric hypersurface $W\subset \PP^5$.
The restriction of $\mu$ to $X$ is a birational map $X\dashrightarrow W$ 
whose inverse is 
defined by the linear system of 
hypersurfaces of degree $7$ in $W$ with 
double points along a smooth irreducible surface $U\subset W$ 
of degree $13$ and sectional genus $11$.
This surface $U$ has four exceptional lines $L_1,\ldots,L_4$
 and one exceptional twisted cubic $C$. The linear system $|H + L_1 + \cdots + L_4 + 3 C|$ on $U$
  gives a birational map onto a minimal K3 surface $\tilde{U}\subset\PP^{14}$ of genus $14$. 
 As mentioned earlier, 
 everything is done by \emph{Macaulay2}
 as in the following.
\begin{codeexample}
     We construct a fourfold $X$ as described 
     in the last row of Table~\ref{Table K3 associated with GMs}. Then we call the function  \texttt{associatedK3surface}
     (by turning on the ``\texttt{Verbose}'' option).
\label{codePerQuartoEsempioInTabella}
\begin{tcolorbox}[breakable=true,boxrule=0.5pt,opacityback=0.1,enhanced jigsaw]  
{\footnotesize
\begin{Verbatim}[commandchars=&!$]
&colore!darkorange$!i11 :$ &colore!darkorchid$!time$ X = &colore!bleudefrance$!specialGushelMukaiFourfold$([4,5,1],[2,3,0]);
&colore!darkorchid$!      -- used 9.5484 seconds$
&colore!circOut$!o11 :$ &colore!output$!ProjectiveVariety, GM fourfold containing a surface$
&colore!output$!      of degree 9 and sectional genus 2$
&colore!darkorange$!i12 :$ &colore!bleudefrance$!describe$ X
&colore!circOut$!o12 =$ &colore!output$!Special Gushel-Mukai fourfold of discriminant 26('')$
&colore!output$!      containing a smooth surface in PP^8 of degree 9 and$
&colore!output$!      sectional genus 2 cut out by 19 hypersurfaces of degree 2$
&colore!output$!      and with class in G(1,4) given by 5*s_(3,1)+4*s_(2,2)$
&colore!output$!      Type: ordinary$
&colore!darkorange$!i13 :$ &colore!darkorchid$!time$ (mu,U,C,f) = &colore!bleudefrance$!associatedK3surface$(X,&colore!bleudefrance$!Verbose$=>&colore!airforceblue$!true$);
&colore!output$!      -- computing the Fano map mu from the fivefold in PP^8 to PP^5 defined$
&colore!output$!         by the hypersurfaces of degree 3 with points of multiplicity 2$
&colore!output$!         along the surface S of degree 9 and genus 2$
&colore!output$!      -- computing the surface U corresponding to the fourfold X$
&colore!output$!      -- computing the surface U' corresponding to another fourfold X'$
&colore!output$!      -- computing the 4 exceptional line(s) in U ∩ U'$
&colore!output$!      -- computing the top components of U ∩ U'$
&colore!output$!      -- computing the map f from U to the minimal K3 surface of degree 26$
&colore!output$!      -- computing the image of f$
&colore!darkorchid$!      -- used 58.1472 seconds$
&colore!darkorange$!i14 :$ f;
&colore!circOut$!o14 :$ &colore!output$!RationalMap (rational map from U to PP^14)$
&colore!darkorange$!i15 :$ &colore!bleudefrance$!? ideal image$ f
&colore!circOut$!o15 =$ &colore!output$!surface of degree 26 and sectional genus 14 in PP^14$
&colore!output$!      cut out by 66 hypersurfaces of degree 2$
\end{Verbatim} 
 } \noindent
\end{tcolorbox}
\end{codeexample}   
The congruence of $3$-secant conics to $S$ can be detected by
 considering the map 
$\varphi:\YY\dashrightarrow Z\subset\PP^{13}$ 
defined by the quadrics through $S$. Indeed,
we have that 
through the general point of $Z$ there pass $6$ lines, which come 
 from five $1$-secant lines to $S$ 
 and one single $3$-secant conic to $S$. This is performed by the following code.
 \begin{codeexample}
\label{codePerDetectCongruence}
     We run the function \href{https://faculty.math.illinois.edu/Macaulay2/doc/Macaulay2/share/doc/Macaulay2/SpecialFanoFourfolds/html/_detect__Congruence.html}{\texttt{detectCongruence}} 
     for the fourfold $X$ constructed in Code Example~\ref{codePerQuartoEsempioInTabella} (last row of 
     Table~\ref{Table K3 associated with GMs}).  
\begin{tcolorbox}[breakable=true,boxrule=0.5pt,opacityback=0.1,enhanced jigsaw]          
{\footnotesize
\begin{Verbatim}[commandchars=&!$]
&colore!darkorange$!i16 :$ &colore!darkorchid$!time$ c = &colore!bleudefrance$!detectCongruence$(X,&colore!bleudefrance$!Verbose$=>&colore!airforceblue$!true$);
&colore!output$!      number lines contained in the image and passing through a general point: 6$
&colore!output$!      number 1-secant lines = 5$
&colore!output$!      number 3-secant conics = 1$
&colore!darkorchid$!      -- used 6.18911 seconds$
&colore!circOut$!o16 :$ &colore!output$!Congruence of 3-secant conics to surface in a Del Pezzo fivefold$
\end{Verbatim} 
 } \noindent
\end{tcolorbox}
\noindent 
 At this point, \emph{Macaulay2} knows that the fourfold $X$ is rational.
  A command such as 
  \href{https://faculty.math.illinois.edu/Macaulay2/doc/Macaulay2/share/doc/Macaulay2/MultiprojectiveVarieties/html/_parametrize_lp__Multiprojective__Variety_rp.html}{\texttt{parametrize(X)}}  gives us a rational parameterization of $X$.
\end{codeexample}
We conclude with a count of parameters (see Proposition~\ref{Prop parameterCount}) 
which shows that the family of fourfolds $X$ 
as in the last row of Table~\ref{Table K3 associated with GMs} describes a locus of codimension $2$ 
in $\mathcal{GM}$, hence of codimension $1$ in  $\mathcal{GM}_{26}^{''}$.
\begin{codeexample}
     \label{codimension 2 countParametri} 
     We run the function 
     \href{https://faculty.math.illinois.edu/Macaulay2/doc/Macaulay2/share/doc/Macaulay2/SpecialFanoFourfolds/html/_parameter__Count.html}{\texttt{parameterCount}}
     for the fourfold $X$ constructed in Code Example~\ref{codePerQuartoEsempioInTabella} (last row of 
     Table~\ref{Table K3 associated with GMs}).    
\begin{tcolorbox}[breakable=true,boxrule=0.5pt,opacityback=0.1,enhanced jigsaw]   
{\footnotesize
\begin{Verbatim}[commandchars=&!$]
&colore!darkorange$!i17 :$ &colore!darkorchid$!time$ &colore!bleudefrance$!parameterCount$(X,&colore!bleudefrance$!Verbose$=>&colore!airforceblue$!true$)
&colore!output$!     -- h^1(N_{S,Y}) = 0$
&colore!output$!     -- h^0(N_{S,Y}) = 24$
&colore!output$!     -- h^1(O_S(2)) = 0, and h^0(I_{S,Y}(2)) = 14 = h^0(O_Y(2)) - \chi(O_S(2));$
&colore!output$!     -- in particular, h^0(I_{S,Y}(2)) is minimal$
&colore!output$!     -- h^0(N_{S,X}) = 0$
&colore!output$!     -- codim{[X] : S ⊂ X ⊂ Y} <= 2$
     &colore!darkorchid$!-- used 337.969 seconds$
&colore!circOut$!o17 =$ &colore!output$!(2, (14, 24, 0))$
\end{Verbatim}
} \noindent 
\end{tcolorbox}
\end{codeexample}

\subsection{Running the construction with other examples}

\begin{codeexample}[Explicit unirationality of $\mathcal F_{11}$]
     \label{F11uniratinality}
     Here
     we construct a random GM fourfold $X\subset\YY=\GG(1,4)\cap\PP^8$ in the component 
     $\mathcal{GM}_{20}$ which contains a smooth 
     rational surface $S\subset\YY$
     of degree $9$, sectional genus $2$, and class $6\sigma_{3,1}+3\sigma_{2,2}$ in $\GG(1,4)$,
     as described in 
     \cite{HoffSta} and \cite[Remark~5.8]{RS3} 
     (see also the fourth 
     row of Table~\ref{Table K3 associated with GMs}).
     Therefore, the corresponding associated K3 surface is a random K3 surface of genus $11$ 
     in $\PP^{11}$. 
     In the notation of the diagrams \eqref{diagramma} and \eqref{diagramma2}, 
     we have $r=2$, $e=2$, $s=5$,
     $W=\PP^4$,
     $U\subset W$ is a non-normal surface  of degree $10$ and sectional genus $8$,
     and the exceptional curves on (the normalization of) $U$ are one line and one twisted cubic curve.
     See also \cite[Section~6]{HoffSta2} for more details on this calculation.
\begin{tcolorbox}[breakable=true,boxrule=0.5pt,opacityback=0.1,enhanced jigsaw]  
{\footnotesize
\begin{Verbatim}[commandchars=&!$]
&colore!darkorange$!i18 :$ X = &colore!bleudefrance$!specialGushelMukaiFourfold$ "general GM 4-fold of discriminant 20";
&colore!circOut$!o18 :$ &colore!output$!ProjectiveVariety, GM fourfold containing a surface$ 
&colore!output$!      of degree 9 and sectional genus 2$
&colore!darkorange$!i19 :$ &colore!bleudefrance$!describe$ X
&colore!circOut$!o19 =$ &colore!output$!Special Gushel-Mukai fourfold of discriminant 20$
&colore!output$!      containing a smooth surface in PP^8 of degree 9 and$
&colore!output$!      sectional genus 2 cut out by 19 hypersurfaces of degree 2$
&colore!output$!      and with class in G(1,4) given by 6*s_(3,1)+3*s_(2,2)$
&colore!darkorange$!i20 :$ &colore!darkorchid$!time$ (mu,U,C,f) = &colore!bleudefrance$!associatedK3surface$(X,&colore!bleudefrance$!Verbose$=>&colore!airforceblue$!true$);
      &colore!output$!-- computing the Fano map mu from the fivefold in PP^8 to PP^4 defined$
      &colore!output$!   by the hypersurfaces of degree 3 with points of multiplicity 2$
      &colore!output$!   along the surface S of degree 9 and genus 2$
      &colore!output$!-- computing the surface U corresponding to the fourfold X$
      &colore!output$!-- computing the 1 exceptional line(s) in U ∩ U'$
      &colore!output$!-- computing the top components of U ∩ U'$
      &colore!output$!-- computing desingularization of U$
      &colore!output$!-- computing the map f from U to the minimal K3 surface of degree 20$
      &colore!output$!-- computing the image of f$
 &colore!darkorchid$!     -- used 114.893 seconds$
&colore!darkorange$!i21 :$ f;
&colore!circOut$!o21 :$ &colore!output$!RationalMap (rational map from U to PP^11)$
&colore!darkorange$!i22 :$ &colore!bleudefrance$!? ideal image$ f
&colore!circOut$!o22 =$ &colore!output$!surface of degree 20 and sectional genus 11 in PP^11$
&colore!output$!      cut out by 36 hypersurfaces of degree 2$
\end{Verbatim} 
 } \noindent
\end{tcolorbox}
\end{codeexample}
\begin{remark}
 The command 
 \href{https://faculty.math.illinois.edu/Macaulay2/doc/Macaulay2/share/doc/Macaulay2/K3Surfaces/html/___K3_lp__Z__Z_rp.html}{\texttt{K3(11)}}
  provided by the package 
 \href{https://faculty.math.illinois.edu/Macaulay2/doc/Macaulay2/share/doc/Macaulay2/K3Surfaces/html/index.html}{\emph{K3Surfaces}} \cite{K3SurfacesSource}
 executes the procedure in Code~Example~\ref{F11uniratinality} and returns a random K3 surface of genus $11$. The same command can be called with other values of $g$,
 for which an explicit unirationality construction is known (see Tables~\ref{unirationalityFgSmall} and \ref{unirationalityFgLarge}).
\end{remark}
 
\begin{codeexample}[Explicit unirationality of $\mathcal F_{20}$]
     \label{code for F20}
     Here
     we construct a random cubic fourfold  in the component 
     $\mathcal{C}_{38}$ which contains a smooth 
     rational surface 
     of degree $10$ and sectional genus $6$ as described in 
     \cite{Nuer} and \cite[Section~3]{RS3} (see also the fifth 
     row of Table~\ref{Table K3 associated with cubics}).
     Therefore, the corresponding associated K3 surface is a random K3 surface of genus $20$ 
     in $\PP^{20}$. 
     In the notation of the diagrams \eqref{diagramma} and \eqref{diagramma2}, 
     we have $r=3$, $e=2$, $s=5$,
     $W=\PP^4$,
     $U\subset W$ is a smooth surface of degree $12$ and sectional genus $14$,
     and the exceptional curves on $U$ are $10$ lines and one rational normal quartic curve.
     See also \cite[Section~6.1]{RS3} for more details on this calculation.
\begin{tcolorbox}[breakable=true,boxrule=0.5pt,opacityback=0.1,enhanced jigsaw]   
{\footnotesize 
\begin{Verbatim}[commandchars=&!$]
&colore!darkorange$!i23 :$ S = &colore!bleudefrance$!surface$ {10,0,0,10};
&colore!circOut$!o23 :$ &colore!output$!ProjectiveVariety, surface in PP^5$
&colore!darkorange$!i24 :$ X = &colore!bleudefrance$!specialCubicFourfold$ S;
&colore!circOut$!o24 :$ &colore!output$!ProjectiveVariety, cubic fourfold containing a surface$
&colore!output$!      of degree 10 and sectional genus 6$
&colore!darkorange$!i25 :$ &colore!bleudefrance$!describe$ X
&colore!circOut$!o25 =$ &colore!output$!Special cubic fourfold of discriminant 38$
&colore!output$!      containing a smooth surface of degree 10 and sectional genus 6$
&colore!output$!      cut out by 10 hypersurfaces of degree 3$
&colore!darkorange$!i26 :$ &colore!darkorchid$!time$ (mu,U,C,f) = &colore!bleudefrance$!associatedK3surface$(X,&colore!bleudefrance$!Verbose$=>&colore!airforceblue$!true$);
      &colore!output$!-- computing the Fano map mu from PP^5 to PP^4 defined by the$
      &colore!output$!   hypersurfaces of degree 5 with points of multiplicity 2$ 
      &colore!output$!   along the surface S of degree 10 and genus 6$
      &colore!output$!-- computing the surface U corresponding to the fourfold X$
      &colore!output$!-- computing the surface U' corresponding to another fourfold X'$
      &colore!output$!-- computing the 10 exceptional line(s) in U ∩ U'$
      &colore!output$!-- computing the top components of U ∩ U'$
      &colore!output$!-- computing the map f from U to the minimal K3 surface of degree 38$
      &colore!output$!-- computing the image of f$
&colore!darkorchid$!      -- used 173.767 seconds$
&colore!darkorange$!i27 :$ f;
&colore!circOut$!o27 :$ &colore!output$!RationalMap (rational map from U to PP^20)$
&colore!darkorange$!i28 :$ &colore!bleudefrance$!? ideal image$ f
&colore!circOut$!o28 =$ &colore!output$!surface of degree 38 and sectional genus 20 in PP^20$
&colore!output$!      cut out by 153 hypersurfaces of degree 2$
\end{Verbatim}
}
\end{tcolorbox}
\end{codeexample}

\begin{codeexample}[Explicit unirationality of $\mathcal F_{22}$]
     \label{code for F22}
     Here
     we construct a random cubic fourfold  in the component 
     $\mathcal{C}_{42}$ which contains a $5$-nodal  
     rational surface 
     of degree $9$ and sectional genus $2$ as described in 
     \cite[Section~4]{RS3} (see also the last row of Table~\ref{Table K3 associated with cubics}).
     Therefore, the corresponding associated K3 surface is a 
     random K3 surface of genus $22$ 
     in $\PP^{22}$. 
     In the notation of the diagrams \eqref{diagramma} and \eqref{diagramma2}, 
     we have $r=3$, $e=3$, $s=3$,
     $W=\mathbb{G}(1,4)\cap\PP^7$ is a Del Pezzo fourfold,
     $U\subset W$ is a smooth surface of degree $21$ and sectional genus $18$,
     and the exceptional curves on $U$ are $5$ lines and $4$ conics.
     See also \cite[Section~6.2]{RS3} 
     for more details on this calculation.
\begin{tcolorbox}[breakable=true,boxrule=0.5pt,opacityback=0.1,enhanced jigsaw]    
     {\footnotesize 
\begin{Verbatim}[commandchars=&!$]
&colore!darkorange$!i29 :$ X = &colore!bleudefrance$!specialCubicFourfold$ "general cubic 4-fold of discriminant 42";
&colore!circOut$!o29 :$ &colore!output$!ProjectiveVariety, cubic fourfold containing a surface$
&colore!output$!      of degree 9 and sectional genus 2$
&colore!darkorange$!i30 :$ &colore!bleudefrance$!describe$ X
&colore!circOut$!o30 =$ &colore!output$!Special cubic fourfold of discriminant 42$
&colore!output$!      containing a 5-nodal surface of degree 9 and sectional$
&colore!output$!      genus 2 cut out by 9 hypersurfaces of degree 3$
&colore!darkorange$!i31 :$ &colore!darkorchid$!time$ (mu,U,C,f) = &colore!bleudefrance$!associatedK3surface$(X,&colore!bleudefrance$!Verbose$=>&colore!airforceblue$!true$);
      &colore!output$!-- computing the Fano map mu from PP^5 to PP^7 defined$
      &colore!output$!   by the hypersurfaces of degree 8 with points of multiplicity 3$
      &colore!output$!   along the surface S of degree 9 and genus 2$
      &colore!output$!-- computing the surface U corresponding to the fourfold X$
      &colore!output$!-- computing the surface U' corresponding to another fourfold X'$
      &colore!output$!-- computing the 5 exceptional line(s) in U ∩ U'$
      &colore!output$!-- computing the top components of U ∩ U'$
      &colore!output$!-- computing the map f from U to the minimal K3 surface of degree 42$
      &colore!output$!-- computing the image of f$
&colore!darkorchid$!      -- used 692.743 seconds$
&colore!darkorange$!i32 :$ f;
&colore!circOut$!o32 :$ &colore!output$!RationalMap (rational map from U to PP^22)$
&colore!darkorange$!i33 :$ &colore!bleudefrance$!? ideal image$ f
&colore!circOut$!o33 =$ &colore!output$!surface of degree 42 and sectional genus 22 in PP^22$
&colore!output$!      cut out by 190 hypersurfaces of degree 2$
\end{Verbatim}
}
\end{tcolorbox}
\end{codeexample}  

\clearpage

\section{Summary tables of examples}

\begin{table}[htbp]
     \renewcommand{\arraystretch}{1.300}
     \centering
     \tabcolsep=1.0pt 
     \footnotesize
     %\begin{adjustbox}{width=\textwidth}
     \begin{tabular}{|c|c|c|c|c|c|c|c|c|}
     \hline
     \rowcolor{gray!5.0}
       {\begin{tabular}{c} Surface $S$ in \\ $\YY=\mathbb{G}(1,4)\cap\PP^8$ \end{tabular}} & $K_S^2$ & {\begin{tabular}{c} Class in \\ $\mathbb{G}(1,4)$\end{tabular}} & {\begin{tabular}{c} Codim \\ in $\mathcal{GM}$ \end{tabular}} & \begin{tabular}{c} Locus \\ in  $\mathcal{GM}$ \end{tabular} & $h^0(\mathcal I_{S/\YY}(2))$ & $h^0(N_{S/\YY})$ & $h^0(N_{S/X})$ &
      {\begin{minipage}[c]{0.2\textwidth} \scriptsize{Curves of degree $e$ in $\YY$ passing though a general  point of $\YY$ and that  are  $(2e-1)$-secant to $S$  for $e\leq5$.}\end{minipage}}\tabularnewline 
     \hline \hline 

     \rowcolor{green!10}
     {\begin{tabular}{c} Quadric surface \cite{DIM} \end{tabular}}  &  $8$ & $ \sigma_{3,1}+ \sigma_{2,2}$ & $1$ & $\mathcal{GM}_{10}^{'}$ & $31$ & $8$ & $0$ & $1$, $0$, $0$, $0$, $0$\\
     \hline \rowcolor{green!5}
      \multicolumn{9}{c}{
     \begin{minipage}[t]{1.1\columnwidth}%
     \emph{M2-command}: \texttt{specialGushelMukaiFourfold "tau-quadric"} \\
     {(The same case as in the first row of Table~\ref{Table K3 associated with GMs})} 
     \end{minipage}} \\       

      \hline 
      \rowcolor{orange!10}
      {\begin{tabular}{c} Quintic Del Pezzo \\ surface \cite{Roth1949} \end{tabular}}  &  $5$ & $3 \sigma_{3,1}+2 \sigma_{2,2}$ & $1$ & $\mathcal{GM}_{10}^{''}$ & $24$ & $18$ & $3$ & $3$, $0$, $0$, $0$, $0$ \\
     \hline \rowcolor{orange!5}
      \multicolumn{9}{c}{
     \begin{minipage}[t]{1.1\columnwidth}%
     \emph{M2-command}: \texttt{specialGushelMukaiFourfold "quintic del Pezzo surface"}
     \end{minipage}} \\

     \hline
     \rowcolor{green!10}
     {\begin{tabular}{c} Cubic scroll \cite{DIM} \end{tabular}}  &  $8$ & $2 \sigma_{3,1}+\sigma_{2,2}$ & $1$ & $\mathcal{GM}_{12}$ & $28$ & $11$ & $0$ & $2$, $0$, $0$, $0$, $0$ \\
    \hline \rowcolor{green!5}
     \multicolumn{9}{c}{
    \begin{minipage}[t]{1.1\columnwidth}%
    \emph{M2-command}: \texttt{specialGushelMukaiFourfold "cubic scroll"}
    \end{minipage}} \\

    \hline
    \rowcolor{orange!10}
    {\begin{tabular}{c} Rational surface of \\ degree $10$ and genus $4$ \end{tabular}}  &  $0$ & $6 \sigma_{3,1}+4 \sigma_{2,2}$ & $1$ & $\mathcal{GM}_{16}$ & $15$ & $29$ & $5$ & $6$, $0$, $0$, $0$, $0$ \\
   \hline \rowcolor{orange!5}
    \multicolumn{9}{c}{
   \begin{minipage}[t]{1.1\columnwidth}%
   \emph{M2-command}: \texttt{specialGushelMukaiFourfold([3,5],[2,2],"quartic scroll")}
    \end{minipage}} \\    

    \hline
    \rowcolor{green!10}
    {\begin{tabular}{c} Rational surface of \\ degree $12$ and genus $5$ \end{tabular}}  &  $-1$ & $7 \sigma_{3,1}+5 \sigma_{2,2}$ & $1$ & $\mathcal{GM}_{18}^{'}$ & $11$ & $32$ & $4$ & $7$, $5$, $0$, $0$, $0$ \\
   \hline \rowcolor{green!5}
    \multicolumn{9}{c}{
   \begin{minipage}[t]{1.1\columnwidth}%
   \emph{M2-command}: \texttt{specialGushelMukaiFourfold([5,9,0,1],[2,3,0,1])}
    \end{minipage}} \\

    \hline
    \rowcolor{orange!10}
    {\begin{tabular}{c} Rational surface of \\ degree $9$ and genus $3$ \end{tabular}}  &  $2$ & $5 \sigma_{3,1}+4 \sigma_{2,2}$ & $1$ & $\mathcal{GM}_{18}^{''}$ & $16$ & $26$ & $3$ & $5$, $0$, $0$, $0$, $0$ \\
   \hline \rowcolor{orange!5}
    \multicolumn{9}{c}{
   \begin{minipage}[t]{1.1\columnwidth}%
   \emph{M2-command}: \texttt{specialGushelMukaiFourfold([4,6,1],[2,2,1])}
    \end{minipage}} \\

    \hline \rowcolor{green!10}
    {\begin{tabular}{c} Rational surface of \\ degree $9$ and genus $2$ \\ \cite{HoffSta,HoffSta2} \end{tabular}}  &  $5$ & $6 \sigma_{3,1}+3 \sigma_{2,2}$ & $1$ & $\mathcal{GM}_{20}$ & $14$ & $25$ & $0$ & $6$, $1$, $0$, $0$, $0$\\
    \hline \rowcolor{green!5}
     \multicolumn{9}{c}{
    \begin{minipage}[t]{1.1\columnwidth}%
    \emph{M2-command}: \texttt{specialGushelMukaiFourfold "general GM 4-fold of discriminant 20"} \\
    {(The same case as in the fourth row of Table~\ref{Table K3 associated with GMs})} 
    \end{minipage}} \\

    \hline
    \rowcolor{orange!10}
    {\begin{tabular}{c} Rational surface of \\ degree $10$ and genus $3$ \end{tabular}}  &  $3$ & $6 \sigma_{3,1}+4 \sigma_{2,2}$ & $1$ & $\mathcal{GM}_{24}$ & $13$ & $27$ & $1$ & $6$, $2$, $0$, $0$, $0$ \\
   \hline \rowcolor{orange!5}
    \multicolumn{9}{c}{
   \begin{minipage}[t]{1.1\columnwidth}%
   \emph{M2-command}: \texttt{specialGushelMukaiFourfold([4,5,1],[2,1,1])}
    \end{minipage}} \\    

    \hline
    \rowcolor{green!10}
    {\begin{tabular}{c} Rational surface of \\ degree $12$ and genus $5$ \end{tabular}}  &  $0$ & $7 \sigma_{3,1}+5 \sigma_{2,2}$ & $1$ & $\mathcal{GM}_{26}^{'}$ & $11$ & $30$ & $2$ & $7$, $4$, $0$, $0$, $0$ \\
   \hline \rowcolor{green!5}
    \multicolumn{9}{c}{
   \begin{minipage}[t]{1.1\columnwidth}%
   \emph{M2-command}: \texttt{specialGushelMukaiFourfold([4,9],[2,4])}
    \end{minipage}} \\    

     \end{tabular}
     %\end{adjustbox}
      \caption{\footnotesize 
      Selected codimension-one families of  GM fourfolds described as the closure of the  locus of smooth quadric hypersurfaces in a del Pezzo fivefold 
      $\YY$ 
      containing some 
      smooth
      surface $S\subset \YY$ cut out by quadrics and varying in an irreducible component of $\mathrm{Hilb}_{\YY}^{\chi(\mathcal{O}_S(t))}$. 
      See \cite[Table~1]{famGushelMukai} for more examples.}
      \label{Table uniruledness GMd d small} 
     \end{table}

\begin{table}[htbp]
     \renewcommand{\arraystretch}{1.300}
     \centering
     \tabcolsep=1.0pt 
     \footnotesize
     %\begin{adjustbox}{width=\textwidth}
     \begin{tabular}{|c|c|c|c|c|c|c|c|c|}
     \hline
     \rowcolor{gray!5.0}
       {\begin{tabular}{c} Surface $S$ in \\ $\YY=\mathbb{G}(1,4)\cap\PP^8$ \end{tabular}} & $K_S^2$ & {\begin{tabular}{c} Class in \\ $\mathbb{G}(1,4)$\end{tabular}} & {\begin{tabular}{c} Codim \\ in $\mathcal{GM}$ \end{tabular}} & \begin{tabular}{c} Locus \\ in  $\mathcal{GM}$ \end{tabular} & $h^0(\mathcal I_{S/\YY}(2))$ & $h^0(N_{S/\YY})$ & $h^0(N_{S/X})$ &
      {\begin{minipage}[c]{0.2\textwidth} \scriptsize{Curves of degree $e$ in $\YY$ passing though a general  point of $\YY$ and that  are  $(2e-1)$-secant to $S$  for $e\leq5$.}\end{minipage}}\tabularnewline 
     \hline \hline 
     
      \rowcolor{orange!10}
      {\begin{tabular}{c} Rational surface of \\ degree $13$ and genus $6$ \end{tabular}}  &  $-1$ & $8 \sigma_{3,1}+5 \sigma_{2,2}$ & $1$ & $\mathcal{GM}_{28}$ & $10$ & $31$ & $2$ & $8$, $6$, $0$, $0$, $0$ \\
     \hline \rowcolor{orange!5}
      \multicolumn{9}{c}{
     \begin{minipage}[t]{1.1\columnwidth}%
     \emph{Contruction of an example}: take the isomorphic image via $\psi_{\Sigma_3}:\PP^6\dashrightarrow \mathbb{G}(1,4)$ of a rational surface $T\subset\PP^5\subset\PP^6$ of degree $8$ and genus $4$,  obtained as the image of the plane 
     via the linear system of sextic curves with four general simple base points and six general double points, which cuts 
     $\Sigma_3$  along  a quintic elliptic curve obtained as the image of a general cubic passing through 
     three of the four simple points and five of the six double points. \\
     \emph{M2-command}: \texttt{specialGushelMukaiFourfold([6,4,6],[3,3,5])}
     \end{minipage}} \\
     
     \hline \rowcolor{green!10}
     {\begin{tabular}{c} Rational surface of \\ degree $11$ and genus $4$ \end{tabular}}  &  $2$ & $6 \sigma_{3,1}+5 \sigma_{2,2}$ & $1$ & $\mathcal{GM}_{28}$ & $12$ & $27$ & $0$ & $6$, $2$, $0$, $0$, $0$ \\
     \hline \rowcolor{green!5}
     \multicolumn{9}{c}{
     \begin{minipage}[t]{1.1\columnwidth}%
     \emph{Contruction of an example}: take the image via $\psi_{\Sigma_3}:\PP^6\dashrightarrow \mathbb{G}(1,4)$ of a rational surface $T\subset\PP^5\subset\PP^6$ of degree $7$ and genus $3$,  obtained as the  image of the plane 
     via the linear system of septic curves with six general double base points and two triple points, which cuts 
     $\Sigma_3$  along  a rational  normal quartic curve obtained as the image of a 
     conic passing through 
     five of the six double points. \\
     \emph{M2-command}: \texttt{specialGushelMukaiFourfold([7,0,6,2],[2,0,5,0])}
     \end{minipage}}
     \end{tabular}
     %\end{adjustbox}
      \caption{\footnotesize 
      Continuation of Table~\ref{Table uniruledness GMd d small}.
      New families of  GM fourfolds.}
      \label{Table uniruledness GM28} 
     \end{table}
   
   \begin{table}[htbp]
     \renewcommand{\arraystretch}{1.300}
     \centering
     \tabcolsep=1.0pt 
     \footnotesize
     %\begin{adjustbox}{width=\textwidth}
     \begin{tabular}{|c|c|c|c|c|c|c|c|c|}
     \hline
     \rowcolor{gray!5.0}
       {\begin{tabular}{c} Surface $S$ \\ in $\PP^5$ \end{tabular}} & $K_S^2$ & {\begin{tabular}{c} Nodes \end{tabular}} & {\begin{tabular}{c} Codim \\ in $\mathcal{C}$ \end{tabular}} & \begin{tabular}{c} Locus \\ in  $\mathcal{C}$ \end{tabular} & $h^0(\mathcal I_{S/\PP^5}(3))$ & $h^0(N_{S/\PP^5})$ & $h^0(N_{S/X})$ &
      {\begin{minipage}[c]{0.2\textwidth} \scriptsize{Curves of degree $e$ passing though a general  point of $\PP^5$ and that  are  $(3e-1)$-secant to $S$  for $e\leq5$.}\end{minipage}}\tabularnewline
        \hline \hline       

        \rowcolor{green!10}
        {\begin{tabular}{c} Del Pezzo surface \\ of degree $5$ \\ \cite{Fano,BRS} \end{tabular}}  &  $5$ & $0$ & $1$ & $\mathcal{C}_{14}$ & $25$ & $35$ & $5$ & $1$, $0$, $0$, $0$, $0$\\
        \hline \rowcolor{green!5}
         \multicolumn{9}{c}{
        \begin{minipage}[t]{1.1\columnwidth}%
       \emph{M2-command (1)}: \texttt{specialCubicFourfold surface \{3,4\}} \\
        \emph{M2-command (2)}: \texttt{specialCubicFourfold "quintic del Pezzo surface"}
      \end{minipage}} \\
     
      \hline \rowcolor{orange!10}
      {\begin{tabular}{c} Rational scroll \\ of degree $4$ \\ \cite{Fano,BRS} \end{tabular}}  &  $8$ & $0$ & $1$ & $\mathcal{C}_{14}$ & $28$ & $29$ & $2$ & $1$, $0$, $0$, $0$, $0$\\
      \hline \rowcolor{orange!5}
       \multicolumn{9}{c}{
      \begin{minipage}[t]{1.1\columnwidth}%
     \emph{M2-command}: \texttt{specialCubicFourfold(PP[2,2])}
    \end{minipage}} \\
   
        \hline \rowcolor{green!10}
        {\begin{tabular}{c} Del Pezzo surface \\ of degree $7$ \\ \cite{RS1} \end{tabular}}  &  $7$ & $1$ & $1$ & $\mathcal{C}_{26}$ & $14$ & $42$ & $1$ & $5$, $1$, $0$, $0$, $0$\\
        \hline \rowcolor{green!5}
         \multicolumn{9}{c}{
        \begin{minipage}[t]{1.1\columnwidth}%
        \emph{M2-command}: \texttt{specialCubicFourfold "one-nodal septic del Pezzo surface"}
      \end{minipage}} \\
        
        \hline \rowcolor{orange!10}
        {\begin{tabular}{c} Rational scroll \\ of degree $7$ \\ \cite{FV18,RS1} \end{tabular}}  &  $8$ & $3$ & $1$ & $\mathcal{C}_{26}$ & $13$ & $44$ & $2$ & $7$, $1$, $0$, $0$, $0$\\
        \hline \rowcolor{orange!5}
         \multicolumn{9}{c}{
        \begin{minipage}[t]{1.1\columnwidth}%
        \emph{M2-command}: \texttt{specialCubicFourfold "3-nodal septic scroll"}
      \end{minipage}} \\
      
     \hline \rowcolor{green!10}
     {\begin{tabular}{c} Rational surface of \\ degree $10$ and genus $6$ \\ \cite{Nuer,RS3} \end{tabular}}  &  $-1$ & $0$ & $1$ & $\mathcal{C}_{38}$ & $10$ & $47$ & $2$ & $7$, $1$, $0$, $0$, $0$\\
     \hline \rowcolor{green!5}
      \multicolumn{9}{c}{
     \begin{minipage}[t]{1.1\columnwidth}%
     \emph{M2-command (1)}: \texttt{specialCubicFourfold surface \{10,0,0,10\}} \\ 
     \emph{M2-command (2)}: \texttt{specialCubicFourfold "general cubic 4-fold of discriminant 38"}
   \end{minipage}} \\

   \hline \rowcolor{orange!10}
   {\begin{tabular}{c} Rational scroll \\ of degree $8$ \\ \cite{Explicit,RS3} \end{tabular}}  &  $8$ & $6$ & $1$ & $\mathcal{C}_{38}$ & $10$ & $47$ & $2$ & $9$, $4$, $1$, $0$, $0$\\
   \hline \rowcolor{orange!5}
    \multicolumn{9}{c}{
   \begin{minipage}[t]{1.1\columnwidth}%
   \emph{M2-command}: \texttt{specialCubicFourfold "6-nodal octic scroll"}
 \end{minipage}} \\

   \hline \rowcolor{green!10}
   {\begin{tabular}{c} Rational surface of \\ degree $9$ and genus $2$ \\ \cite{RS3} \end{tabular}}  &  $5$ & $5$ & $1$ & $\mathcal{C}_{42}$ & $9$ & $48$ & $2$ & $9$, $7$, $1$, $0$, $0$\\
     \hline \rowcolor{green!5}
      \multicolumn{9}{c}{
     \begin{minipage}[t]{1.1\columnwidth}%
     \emph{M2-command}: \texttt{specialCubicFourfold "general cubic 4-fold of discriminant 42"}
   \end{minipage}}
   
   \end{tabular}
   %\end{adjustbox}
    \caption{\footnotesize  
    Selected descriptions of
    families of rational cubic fourfolds 
    for which we can explicitly calculate associated K3 surfaces. See \cite[Table~1]{RS3} for more examples.} 
    \label{Table K3 associated with cubics} 
   \end{table}
   
   \begin{table}[htbp]
     \renewcommand{\arraystretch}{1.300}
     \centering
     \tabcolsep=1.0pt 
     \footnotesize
     %\begin{adjustbox}{width=\textwidth}
     \begin{tabular}{|c|c|c|c|c|c|c|c|c|}
     \hline
     \rowcolor{gray!5.0}
       {\begin{tabular}{c} Surface $S$ in \\ $\YY=\mathbb{G}(1,4)\cap\PP^8$ \end{tabular}} & $K_S^2$ & {\begin{tabular}{c} Class in \\ $\mathbb{G}(1,4)$\end{tabular}} & {\begin{tabular}{c} Codim \\ in $\mathcal{GM}$ \end{tabular}} & \begin{tabular}{c} Locus \\ in  $\mathcal{GM}$ \end{tabular} & $h^0(\mathcal I_{S/\YY}(2))$ & $h^0(N_{S/\YY})$ & $h^0(N_{S/X})$ &
      {\begin{minipage}[c]{0.2\textwidth} \scriptsize{Curves of degree $e$ in $\YY$ passing though a general  point of $\YY$ and that  are  $(2e-1)$-secant to $S$  for $e\leq5$.}\end{minipage}}\tabularnewline 
     \hline \hline 
     \rowcolor{orange!10}
     {\begin{tabular}{c} Quadric surface \cite{DIM} \end{tabular}}  &  $8$ & $ \sigma_{3,1}+ \sigma_{2,2}$ & $1$ & $\mathcal{GM}_{10}^{'}$ & $31$ & $8$ & $0$ & $1$, $0$, $0$, $0$, $0$\\
     \hline \rowcolor{orange!5}
      \multicolumn{9}{c}{
     \begin{minipage}[t]{1.1\columnwidth}%
     \emph{M2-command}: \texttt{specialGushelMukaiFourfold "tau-quadric"}
     \end{minipage}} \\
   
     \hline \rowcolor{green!10}
     {\begin{tabular}{c} K3 surface of degree \\ $14$ and genus $8$ \cite{Explicit} \end{tabular}}  &  $0$ & $ 9 \sigma_{3,1}+ 5 \sigma_{2,2}$ & $1$ & $\mathcal{GM}_{10}^{'}$ & $10$ & $39$ & $10$ & $9$, $8$, $1$, $0$, $0$\\
     \hline \rowcolor{green!5}
      \multicolumn{9}{c}{
     \begin{minipage}[t]{1.1\columnwidth}%
     \emph{M2-command}: \texttt{specialGushelMukaiFourfold "K3 surface of genus 8 with class (9,5)"}
     \end{minipage}} \\
   
     \hline \rowcolor{orange!10}
     {\begin{tabular}{c} Plane \cite{Roth1949} \end{tabular}}  &  $9$ & $ \sigma_{3,1}$ & $2$ & $\mathcal{GM}_{10}^{''}$ & $34$ & $4$ & $0$ & $1$, $0$, $0$, $0$, $0$\\
     \hline \rowcolor{orange!5}
      \multicolumn{9}{c}{
     \begin{minipage}[t]{1.1\columnwidth}%
     \emph{M2-command (1)}: \texttt{specialGushelMukaiFourfold "sigma-plane"} \\
     \emph{M2-command (2)}: \texttt{specialGushelMukaiFourfold schubertCycle(\{3,1\},GG(1,4))}
     \end{minipage}} \\
   
     \hline \rowcolor{green!10}
     {\begin{tabular}{c} Rational surface of \\ degree $9$ and genus $2$ \\ \cite{HoffSta,HoffSta2} \end{tabular}}  &  $5$ & $6 \sigma_{3,1}+3 \sigma_{2,2}$ & $1$ & $\mathcal{GM}_{20}$ & $14$ & $25$ & $0$ & $6$, $1$, $0$, $0$, $0$\\
     \hline \rowcolor{green!5}
      \multicolumn{9}{c}{
     \begin{minipage}[t]{1.1\columnwidth}%
     \emph{M2-command}: \texttt{specialGushelMukaiFourfold "general GM 4-fold of discriminant 20"}
     \end{minipage}} \\
   
   \hline \rowcolor{orange!10}
     {\begin{tabular}{c} Rational surface of \\ degree $13$ and genus $6$ \end{tabular}}  &  $-2$ & $8 \sigma_{3,1}+5 \sigma_{2,2}$ & $1$ & $\mathcal{GM}_{20}$ & $10$ & $33$ & $4$ & $8$, $7$, $1$, $0$, $0$\\
     \hline \rowcolor{orange!5}
      \multicolumn{9}{c}{
     \begin{minipage}[t]{1.1\columnwidth}%
     \emph{Contruction of an example}: take the isomorphic image via $\psi_{\Sigma_3}:\PP^6\dashrightarrow \mathbb{G}(1,4)$ of a rational surface $T\subset\PP^5\subset\PP^6$ of degree $8$ and genus $4$,  obtained as the image of the plane 
     via the linear system of quintic curves with nine general simple base points and two general double points, which cuts 
     $\Sigma_3$  along  a quintic elliptic curve obtained as the image of a general cubic passing through 
      six of the nine simple points and the two double points. \\
     \emph{M2-command}: \texttt{specialGushelMukaiFourfold([5,9,2],[3,6,2])}
     \end{minipage}} \\   
   
     \hline \rowcolor{green!10}
     {\begin{tabular}{c} Rational normal \\ scroll of degree $7$ \\ (see also \cite{famGushelMukai}) \end{tabular}}  &  $8$ & $4 \sigma_{3,1}+3 \sigma_{2,2}$ & $3$ & $\mathcal{GM}_{20}$ & $16$ & $21$ & $0$ & $4$, $1$, $0$, $0$, $0$\\
     \hline \rowcolor{green!5}
      \multicolumn{9}{c}{
     \begin{minipage}[t]{1.1\columnwidth}%
     \emph{Contruction of an example}: take the isomorphic image via $\psi_{\Sigma_3}:\PP^6\dashrightarrow \mathbb{G}(1,4)$ 
     of a rational normal quintic scroll $T\subset\PP^6$, obtained as the image of the plane 
     via the linear system of cubic curves with one double base point, which cuts 
     $\Sigma_3$  along  a rational normal quartic curve obtained as the image of a 
     %general 
     conic passing through 
     the double point. \\
     \emph{M2-command}: \texttt{specialGushelMukaiFourfold([3,0,1],[2,0,1])}
     \end{minipage}} \\   
     
     \hline \rowcolor{orange!10}
     {\begin{tabular}{c} Triple projection of \\ K3 surface of degree \\ $26$ and  genus $14$ \end{tabular}}  &  $-1$ & $11 \sigma_{3,1}+6 \sigma_{2,2}$ & $2$ & $\mathcal{GM}_{26}^{''}$ & $7$ & $37$ & $6$ & $11$, $22$, $32$, $6$, $1$ \\
     \hline \rowcolor{orange!5}
      \multicolumn{9}{c}{
     \begin{minipage}[t]{1.1\columnwidth}%
     \emph{Contruction of an example}: Let $X\subset \YY$ be an example of GM fourfold
     containing a surface $S$ of degree $9$ and genus $2$ as in last row, and let  $\mu:\YY\dashrightarrow\PP^5$ be
     the rational map 
     defined by the linear system of cubic hypersurfaces in $\YY$ with double points along $S$. Then 
     the base locus of $\mu$ is the union of a threefold $B$ and a $\sigma_{3,1}$-plane $P$ such that $P\cap B=P\cap S$
     is a conic.
     The projection $\xi:\YY\dashrightarrow \PP^5$ of $\YY$ from $P$
     sends the surface $S$ isomorphically into a quintic del Pezzo surface $\xi(S)\subset\PP^5$.
     The inverse image $\overline{\xi^{-1}(\xi(S))}$ is isomorphic to an internal 
     projection of a Fano threefold $\mathbb{G}(1,5)\cap \PP^{9}\subset\PP^9$ and intersects $X$
     in the union of $S$ and another smooth surface of degree $17$ and genus $11$, 
     which is isomorphic to a triple projection 
     of a minimal K3 surface of degree $26$ and genus $14$ in $\PP^{14}$.\\
     \emph{M2-command}: \footnotesize{\texttt{specialGushelMukaiFourfold("triple projection of K3 surface of degree 26")}}
     \end{minipage}} \\
     
     \hline \rowcolor{green!10}
     {\begin{tabular}{c} Rational surface of \\ degree $9$ and genus $2$ \end{tabular}}  &  $5$ & $5 \sigma_{3,1}+4 \sigma_{2,2}$ & $2$ & $\mathcal{GM}_{26}^{''}$ & $14$ & $24$ & $0$ & $5$, $1$, $0$, $0$, $0$ \\
     \hline \rowcolor{green!5}
     \multicolumn{9}{c}{
     \begin{minipage}[t]{1.1\columnwidth}%
     \emph{Contruction of an example}: take the image via $\psi_{\Sigma_3}:\PP^6\dashrightarrow \mathbb{G}(1,4)$ of a rational surface $T\subset\PP^6$ of degree $7$ and genus $2$,  obtained as the  image of the plane 
     via the linear system of quartic curves with five general simple base points and one general  double point, which cuts 
     $\Sigma_3$  along  a rational  normal quintic curve obtained as the image of a general conic passing through 
     three of the five simple points. \\
     \emph{M2-command}: \texttt{specialGushelMukaiFourfold([4,5,1],[2,3,0])}
     \end{minipage}}
   \end{tabular}
   %\end{adjustbox}
    \caption{\footnotesize  
    New and selected descriptions of
    families of rational GM fourfolds 
    for which we can explicitly calculate associated K3 surfaces. In all cases, the surface $S$ is smooth and cut out by quadrics.}
    \label{Table K3 associated with GMs} 
   \end{table}
   
   \begin{table}[htbp]
     \renewcommand{\arraystretch}{1.300}
     \centering
     \tabcolsep=1.0pt 
     \footnotesize
   %\begin{adjustbox}{width=\textwidth}
     \begin{tabular}{|c|c|c|c|c|c|c|c|c|}
          \hline
          \rowcolor{gray!5.0}
            {\begin{tabular}{c} Surface $S$ in \\ $\YY=\mathbb{G}(1,4)\cap\PP^8$ \end{tabular}} & $K_S^2$ & {\begin{tabular}{c} Class in \\ $\mathbb{G}(1,4)$\end{tabular}} & {\begin{tabular}{c} Codim \\ in $\mathcal{GM}$ \end{tabular}} & \begin{tabular}{c} Locus \\ in  $\mathcal{GM}$ \end{tabular} & $h^0(\mathcal I_{S/\YY}(2))$ & $h^0(N_{S/\YY})$ & $h^0(N_{S/X})$ &
           {\begin{minipage}[c]{0.2\textwidth} \scriptsize{Curves of degree $e$ in $\YY$ passing though a general  point of $\YY$ and that  are  $(2e-1)$-secant to $S$  for $e\leq5$.}\end{minipage}}\tabularnewline 
          \hline \hline 

          \rowcolor{orange!10}
          {\begin{tabular}{c} $1$-nodal surface of \\ degree $11$ and genus $3$ \\ (see also \cite{rationalSta}) \end{tabular}}  &  $3$ & $7 \sigma_{3,1}+4 \sigma_{2,2}$ & $2$ & $\mathcal{GM}_{26}^{''}$ & $11$ & $29$ & $2$ & $7$, $4$, $1$, $0$, $0$ \\
          \hline \rowcolor{orange!5}
           \multicolumn{9}{c}{
          \begin{minipage}[t]{1.1\columnwidth}%
          \emph{Contruction of an example}: Let $X\subset \YY$ be an example of GM fourfold
          containing a surface $S$ of degree $9$ and genus $2$ as in last row of Table~\ref{Table K3 associated with GMs}, and let  $\mu:\YY\dashrightarrow\PP^5$ be
          the rational map 
          defined by the linear system of cubic hypersurfaces in $\YY$ with double points along $S$. 
          The locus $\{p\in\overline{\mu(\YY)}:\dim\overline{\mu^{-1}(p)} \geq 2\}$ consists of a twisted cubic 
          and 
          four lines. If $L$ is one of these four lines, the inverse image $\overline{\mu^{-1}(L)}\subset \YY$ 
          is a threefold cutting $X$ in the union of the surface $S$ and a surface of degree $11$ and genus $3$ with a node.\\
          \emph{M2-command}: \footnotesize{\texttt{specialGushelMukaiFourfold("surface of degree 11 and genus 3 with class (7,4)")}}
          \end{minipage}} \\               

     \hline
     \rowcolor{green!10}
      {\begin{tabular}{c} Rational surface of \\ degree $14$ and genus $7$ \end{tabular}}  &  $-1$ & $9 \sigma_{3,1}+5 \sigma_{2,2}$ & $\_\_$ & $\mathcal{GM}_{34}^{'}$ & $9$ & $\_\_$ & $\_\_$ & $9$, $8$, $1$, $0$, $0$ \\
     \hline \rowcolor{green!5}
      \multicolumn{9}{c}{
     \begin{minipage}[t]{1.1\columnwidth}%
     \emph{Contruction of an example}: take the isomorphic image via $\psi_{\Sigma_3}:\PP^6\dashrightarrow \mathbb{G}(1,4)$ of a rational surface $T\subset\PP^5\subset\PP^6$ of degree $8$ and genus $4$,  obtained as the image of the plane 
     via the linear system of sextic curves with four general simple base points and six general double points, which cuts 
     $\Sigma_3$  along  a quintic elliptic curve obtained as the image of a general cubic passing through 
     one of the four simple points and the six double points. \\
     \emph{M2-command}: \texttt{specialGushelMukaiFourfold([6,4,6],[3,1,6])}
     \end{minipage}} %\\
%
%      \hline 
%     \rowcolor{orange!10}
%      {\begin{tabular}{c} $1$-nodal surface of \\ degree $15$ and genus $7$ \end{tabular}}  &  $-1$ & $9 \sigma_{3,1}+6 \sigma_{2,2}$ & $2$ & $\mathcal{GM}_{42}^{''}$ & $7$ & $31$ & $0$ & $9$, $18$, $22$, $10$, $1$ \\
%     \hline \rowcolor{orange!5}
%      \multicolumn{9}{c}{
%     \begin{minipage}[t]{1.1\columnwidth}%
%     \emph{Contruction of an example}: take the image via $\psi_{\Sigma_3}:\PP^6\dashrightarrow \mathbb{G}(1,4)$ of a rational surface $T\subset\PP^5\subset\PP^6$ of degree $8$ and genus $4$,  obtained as the image of the plane 
%     via the linear system of sextic curves with four general simple base points and six general double points, which cuts 
%     $\Sigma_3$  along a rational normal quartic curve obtained as the image of a line passing through 
%     two of the four simple points. \\
%     \emph{M2-command}: \texttt{specialGushelMukaiFourfold([6,4,6],[1,2,0])}
%     \end{minipage}} 
%
     \end{tabular}
     %\end{adjustbox}
      \caption{\footnotesize Continuation of Table~\ref{Table K3 associated with GMs}. 
      Note, however, that in the last row 
      the surface $S\subset\YY$ is not cut out by quadrics, and moreover
      the function \texttt{associatedK3surface}  currently 
      fails (some requirements are not met).}
      \label{Table K3 associated with GMs (further examples)} 
     \end{table}    

\begin{comment}
     \begin{table}[htbp]
          \renewcommand{\arraystretch}{1.300}
          \centering
          \tabcolsep=1.0pt 
          \footnotesize
        %\begin{adjustbox}{width=\textwidth}
          \begin{tabular}{|c|c|c|}
               \hline
               \rowcolor{gray!5.0}
                 {\begin{tabular}{c} $W$ \end{tabular}} & $U$ & {\begin{tabular}{c} Degrees of the \\ exceptional curves \end{tabular}} \\
               \hline \hline 
          \end{tabular}
          %\end{adjustbox}
           \caption{\footnotesize Further information on the 7 examples of Table~\ref{Table K3 associated with cubics}. 
           Notation as in the diagrams \eqref{diagramma} and \eqref{diagramma2}.}
           \label{Table further info} 
          \end{table}    
\end{comment}     

\clearpage
     
%\bibliographystyle{amsalpha} %  {amsplain} 
%\bibliography{../Bibliography/bibliography}

\begin{thebibliography}{BHSS22}

\bibitem[ABS21]{Hanine-Michele-2}
H.~Awada, M.~Bolognesi, and G.~Staglian\`o, \emph{Some moduli of $n$-pointed
  {F}ano fourfolds}, Eur. J. Math. (to appear) (2021), preprint:
  \url{https://arxiv.org/abs/2009.11770}.

\bibitem[Add16]{Add2016}
N.~Addington, \emph{On two rationality conjectures for cubic fourfolds}, Math.
  Res. Lett. \textbf{23} (2016), 1--13.

\bibitem[AT14]{AT}
N.~Addington and R.~Thomas, \emph{Hodge theory and derived categories of cubic
  fourfolds}, Duke Math. J. \textbf{163} (2014), no.~10, 1886--1927.

\bibitem[BHSS22]{RationalMaps}
C.~J. Bott, S.~H. Hassanzadeh, K.~Schwede, and D.~Smolkin,
  \emph{\emph{RationalMaps}, a package for \emph{Macaulay2}}, preprint:
  \url{https://arxiv.org/abs/1908.04337}, 2022.

\bibitem[BP21]{BrakkePertusi}
E.~Brakkee and L.~Pertusi, \emph{Marked and labelled {G}ushel--{M}ukai
  fourfolds}, Rationality of Varieties (Cham) (Gavril Farkas, Gerard van~der
  Geer, Mingmin Shen, and Lenny Taelman, eds.), Springer International
  Publishing, 2021, pp.~129--146.

\bibitem[BRS19]{BRS}
M.~Bolognesi, F.~Russo, and G.~Staglian\`o, \emph{Some loci of rational cubic
  fourfolds}, Math. Ann. \textbf{373} (2019), no.~1, 165--190.

\bibitem[DIM15]{DIM}
O.~Debarre, A.~Iliev, and L.~Manivel, \emph{Special prime {F}ano fourfolds of
  degree $10$ and index $2$}, Recent Advances in Algebraic Geometry: A Volume
  in Honor of {R}ob {L}azarsfeld’s 60th Birthday (C.~Hacon,
  M.~Musta\c{t}\u{a}, and M.~Popa, eds.), London Math. Soc. Lecture Note Ser.,
  Cambridge Univ. Press, 2015, pp.~123--155.

\bibitem[DK18]{DK1}
O.~Debarre and A.~Kuznetsov, \emph{{G}ushel--{M}ukai varieties: Classification
  and birationalities}, Algebr. Geom. \textbf{5} (2018), 15--76.

\bibitem[DK19]{DK2}
\bysame, \emph{Gushel--{M}ukai varieties: {L}inear spaces and periods}, Kyoto
  J. Math. \textbf{59} (2019), no.~4, 897--953.

\bibitem[DK20]{DK3}
\bysame, \emph{{G}ushel--{M}ukai varieties: moduli}, Internat. J. Math.
  \textbf{31} (2020), no.~2.

\bibitem[Enr97]{Enr}
F.~Enriques, \emph{Sulla irrazionalit\`a da cui pu\`o farsi dipendere la
  resoluzione di un'equazione algebrica $f(x,y,z)=0$ con funzioni razionali di
  due parametri}, Math. Ann. \textbf{49} (1897), 1--23.

\bibitem[EST20]{IntegralClosureSource}
D.~Eisenbud, M.~Stillman, and A.~Taylor, \emph{{\emph{IntegralClosure}}: a {\sc
  macaulay2} package for computing integral closures of affine domains and
  ideals, version~1.10}, source code and documentation available at
  \url{https://faculty.math.illinois.edu/Macaulay2/doc/Macaulay2/share/doc/Macaulay2/IntegralClosure/html/index.html},
  2020.

\bibitem[Fan43]{Fano}
G.~Fano, \emph{Sulle forme cubiche dello spazio a cinque dimensioni contenenti
  rigate razionali del $4^\circ$ ordine}, Comment. Math. Helv. \textbf{15}
  (1943), no.~1, 71--80.

\bibitem[Fuj90]{fujita-polarizedvarieties}
T.~Fujita, \emph{Classification theories of polarized varieties}, London Math.
  Soc. Lecture Note Ser., vol. 155, Cambridge Univ. Press, Cambridge, 1990.

\bibitem[Ful84]{fulton-intersection}
W.~Fulton, \emph{Intersection theory}, Ergeb. Math. Grenzgeb. (3), no.~2,
  Springer-Verlag, 1984.

\bibitem[FV18]{FV18}
G.~Farkas and A.~Verra, \emph{The universal {K3} surface of genus 14 via cubic
  fourfolds}, J. Math. Pures Appl. \textbf{111} (2018), 1--20.

\bibitem[FV21]{FarkasVerraC42}
\bysame, \emph{The unirationality of the moduli space of {K3} surfaces of genus
  22}, Math. Ann. \textbf{380} (2021), 953--973.

\bibitem[GHS07]{Gritsenko2007}
V.~A. Gritsenko, K.~Hulek, and G.~K. Sankaran, \emph{The {K}odaira dimension of
  the moduli of {K3} surfaces}, Invent. Math. \textbf{169} (2007), no.~3,
  519--567.

\bibitem[GS21]{macaulay2}
D.~R. Grayson and M.~E. Stillman, \emph{{\sc Macaulay2} --- {A} software system
  for research in algebraic geometry (version 1.19.1)}, home page:
  \url{http://www.math.uiuc.edu/Macaulay2/}, 2021.

\bibitem[Has99]{Hassett}
B.~Hassett, \emph{Some rational cubic fourfolds}, J. Algebraic Geom. \textbf{8}
  (1999), no.~1, 103--114.

\bibitem[Has00]{Has00}
\bysame, \emph{Special cubic fourfolds}, Comp. Math. \textbf{120} (2000),
  no.~1, 1--23.

\bibitem[Has16]{Levico}
\bysame, \emph{Cubic fourfolds, {K3} surfaces, and rationality questions},
  Rationality Problems in Algebraic Geometry: Levico Terme, Italy 2015
  (R.~Pardini and G.~P. Pirola, eds.), Springer International Publishing, Cham,
  2016, pp.~29--66.

\bibitem[HPT17]{HPT2}
B.~Hassett, A.~Pirutka, and Y.~Tschinkel, \emph{Intersections of three quadrics
  in $\mathbb{P}^7$}, Surv. Differ. Geom. \textbf{22} (2017), 259--274.

\bibitem[HS20]{HoffSta}
M.~Hoff and G.~Staglian\`o, \emph{New examples of rational {G}ushel-{M}ukai
  fourfolds}, Math. Z. \textbf{296} (2020), 1585--1591.

\bibitem[HS21a]{HoffSta2}
\bysame, \emph{Explicit constructions of {K3} surfaces and unirational
  {N}oether-{L}efschetz divisors}, preprint:
  \url{https://arxiv.org/abs/2110.15819}, 2021.

\bibitem[HS21b]{K3SurfacesSource}
\bysame, \emph{{\emph{K3Surfaces}}: a {\sc macaulay2} package for constructing
  {K3} surfaces}, source code and documentation available at
  \url{https://faculty.math.illinois.edu/Macaulay2/doc/Macaulay2/share/doc/Macaulay2/K3Surfaces/html/index.html},
  2021.

\bibitem[Jos15]{Jost2015}
C.~Jost, \emph{Computing characteristic classes and the topological {E}uler
  characteristic of complex projective schemes}, J. Softw. Algebra Geom.
  \textbf{7} (2015), no.~1, 31--39.

\bibitem[KO73]{kobayashi-ochiai}
S.~Kobayashi and T.~Ochiai, \emph{Characterizations of complex projective
  spaces and hyperquadrics}, J. Math. Kyoto Univ. \textbf{13} (1973), no.~1,
  31--47.

\bibitem[KP18]{KPEpi}
A.~Kuznetsov and Y.~Prokhorov, \emph{Prime {F}ano threefolds of genus 12 with a
  {$\Bbb G_{\rm m}$}-action and their automorphisms}, \'{E}pijournal Geom.
  Alg\'{e}brique \textbf{2} (2018), Art. 3, 14. \MR{3816899}

\bibitem[Kuz10]{kuz4fold}
A.~Kuznetsov, \emph{Derived categories of cubic fourfolds}, Cohomological and
  Geometric Approaches to Rationality Problems, Progress in Mathematics, vol.
  282, Birkh\"{a}user Boston, 2010, pp.~219--243.

\bibitem[Kuz16]{kuz2}
\bysame, \emph{Derived categories view on rationality problems}, Rationality
  Problems in Algebraic Geometry: Levico Terme, Italy 2015 (R.~Pardini and
  G.~P. Pirola, eds.), Springer International Publishing, Cham, 2016,
  pp.~67--104.

\bibitem[Lai17]{Lai}
K.~Lai, \emph{New cubic fourfolds with odd-degree unirational
  parametrizations}, Algebra \& Number Theory \textbf{11} (2017), 1597--1626.

\bibitem[Ma18]{Ma2018-oc}
S.~Ma, \emph{On the {K}odaira dimension of orthogonal modular varieties},
  Invent. Math. \textbf{212} (2018), no.~3, 859--911.

\bibitem[Mor40]{Morin}
U.~Morin, \emph{Sulla razionalit\`a dell'ipersuperficie cubica dello spazio
  lineare {$S_5$}}, Rend. Semin. Mat. Univ. Padova \textbf{11} (1940),
  108--112.

\bibitem[Muk88]{Muk88}
S.~Mukai, \emph{Curves, {K3} surfaces and {F}ano $3$-folds of genus $\leq 10$},
  Algebraic geometry and commutative algebra, {V}ol.\ {I}, Kinokuniya, Tokyo,
  1988, pp.~357--377.

\bibitem[Muk89]{mukai-biregularclassification}
\bysame, \emph{Biregular classification of {F}ano 3-folds and {F}ano manifolds
  of coindex 3}, Proc. Natl. Acad. Sci. USA \textbf{86} (1989), no.~9,
  3000--3002.

\bibitem[Muk92]{Mukg1820}
\bysame, \emph{Polarized {$K3$} surfaces of genus {$18$} and {$20$}}, Complex
  projective geometry ({T}rieste, 1989/{B}ergen, 1989), London Math. Soc.
  Lecture Note Ser., vol. 179, Cambridge Univ. Press, Cambridge, 1992,
  pp.~264--276. \MR{1201388}

\bibitem[Muk96]{Mukg11}
\bysame, \emph{Curves and {K3} surfaces of genus eleven}, Moduli of vector
  bundles ({S}anda, 1994; {K}yoto, 1994), Lecture Notes in Pure and Appl.
  Math., vol. 179, Dekker, New York, 1996, pp.~189--197.

\bibitem[Muk06]{Mukg13}
\bysame, \emph{Polarized {K3} surfaces of genus thirteen}, Moduli Spaces and
  Arithmetic Geometry (Kyoto, 2004) (Tokyo, Japan), Mathematical Society of
  Japan, 2006, pp.~315--326.

\bibitem[Muk09]{Mukg17}
\bysame, \emph{Unirationality of moduli spaces of polarized {K}3 surfaces, talk
  held in {M}oscow at the international conference {"Geometry of Algebraic
  Varieties"} dedicated to the memory of {V}asily {A}lexeevich {I}skovskikh},
  available at
  \url{http://www.mathnet.ru/php/presentation.phtml?option_lang=eng&presentid=355},
  2009.

\bibitem[Muk12]{Mukg16}
\bysame, \emph{K3 surfaces of genus sixteen}, RIMS Preprints \textbf{No. 1743}
  (2012).

\bibitem[NO22]{NOtte}
J.~Nicaise and J.~C. Ottem, \emph{Tropical degenerations and stable
  rationality}, Duke Math. J. (to appear) (2022), preprint:
  \url{https://arxiv.org/abs/1911.06138}.

\bibitem[Nue15]{Nuer}
H.~Nuer, \emph{Unirationality of moduli spaces of special cubic fourfolds and
  {K3} surfaces}, Algebr. Geom. \textbf{4} (2015), 281--289.

\bibitem[Ouc20]{Ouchi2020}
G.~Ouchi, \emph{{H}ilbert schemes of two points on {K3} surfaces and certain
  rational cubic fourfolds}, Comm. Algebra \textbf{49} (2020), no.~3,
  1173--1179.

\bibitem[Pet21]{petok2021kodaira}
J.~Petok, \emph{Kodaira dimension of moduli of special ${K3}^{[2]}$-fourfolds
  of degree 2}, preprint: \url{https://arxiv.org/abs/1911.03435}, 2021.

\bibitem[Rot49]{Roth1949}
L.~Roth, \emph{Algebraic varieties with canonical curve sections}, Ann. Mat.
  Pura Appl. \textbf{29} (1949), no.~1, 91--97.

\bibitem[RS19]{RS1}
F.~Russo and G.~Staglian\`o, \emph{Congruences of $5$-secant conics and the
  rationality of some admissible cubic fourfolds}, Duke Math. J. \textbf{168}
  (2019), no.~5, 849--865.

\bibitem[RS21a]{Explicit}
\bysame, \emph{Explicit rationality of some special {F}ano fourfolds},
  Rationality of Varieties, Springer International Publishing, 2021,
  pp.~323--343.

\bibitem[RS21b]{RS3}
\bysame, \emph{Trisecant flops, their associated {K3} surfaces and the
  rationality of some {F}ano fourfolds}, J. Eur. Math. Soc. (to appear) (2021),
  preprint: \url{https://arxiv.org/abs/1909.01263}.

\bibitem[Sta18]{packageCremona}
G.~Staglian\`o, \emph{A {Macaulay2} package for computations with rational
  maps}, J. Softw. Alg. Geom. \textbf{8} (2018), no.~1, 61--70.

\bibitem[Sta21a]{2021package}
\bysame, \emph{Computations with rational maps between multi-projective
  varieties}, J. Softw. Alg. Geom. \textbf{11} (2021), 143--153.

\bibitem[Sta21b]{famGushelMukai}
\bysame, \emph{On some families of {G}ushel-{M}ukai fourfolds}, Algebra \&
  Number Theory (to appear) (2021), preprint:
  \url{https://arxiv.org/abs/2002.07026}.

\bibitem[Sta21c]{rationalSta}
\bysame, \emph{Some new rational {G}ushel fourfolds}, Mediterr. J. Math.
  \textbf{18} (2021), no.~5.

\bibitem[Sta22]{SpecialFanoFourfoldsSource}
\bysame, \emph{{\emph{SpecialFanoFourfolds}}: a {\sc macaulay2} package for
  working with special cubic fourfolds and special {G}ushel-{M}ukai fourfolds,
  version~2.5.1}, source code and documentation available at
  \url{https://faculty.math.illinois.edu/Macaulay2/doc/Macaulay2/share/doc/Macaulay2/SpecialFanoFourfolds/html/index.html},
  2022.

\bibitem[Tot15]{Totaro2015}
B.~Totaro, \emph{Hypersurfaces that are not stably rational}, J. Amer. Math.
  Soc. \textbf{29} (2015), no.~3, 883--891.

\bibitem[TVA19]{TaVaA}
S.~Tanimoto and A.~V\'arilly-Alvarado, \emph{Kodaira dimension of moduli of
  special cubic fourfolds}, J. Reine Angew. Math. \textbf{752} (2019),
  265--300.

\end{thebibliography}

\providecommand{\bysame}{\leavevmode\hbox to3em{\hrulefill}\thinspace}
\providecommand{\MR}{\relax\ifhmode\unskip\space\fi MR }
% \MRhref is called by the amsart/book/proc definition of \MR.
\providecommand{\MRhref}[2]{%
  \href{http://www.ams.org/mathscinet-getitem?mr=#1}{#2}
}
\providecommand{\href}[2]{#2}

\end{document}